\newtheorem{thm}{Theorem}[section]
\newtheorem{cor}[thm]{Corollary}
\newtheorem{lem}[thm]{Lemma}
\newtheorem{pro}[thm]{Proposition}
\theoremstyle{definition}
\newtheorem{dfn}[thm]{Definition}
\newtheorem{exa}[thm]{Example}
\newtheorem{rmk}[thm]{Remark}
\def\C{{\mathbb C}}
\def\id{{\textrm{id}}}
\def\O{{\textrm{O}}}
\def\R{{\mathbb R}}
\def\Sp{{\textrm{Sp}}}
\def\U{{\textrm{U}}}
\title[Epsilon-symplectic rigidity]{Epsilon-non-squeezing and $C^0$-rigidity of epsilon-symplectic embeddings}
\author{Stefan M\"uller}
\address{Stanford University, Department of Mathematics, 450 Serra Mall, Building 380, Stanford, CA 94305}
\email{stefanmueller@stanford.edu}
\address{Georgia Southern University, Department of Mathematical Sciences, 65 Georgia Ave.\ Room 3008, P.O.\ Box 8093, Statesboro, GA 30460}
\subjclass[2010]{53D22, 57R17, 37M15, 65P10, 68Q12, 81P68}
\keywords{Epsilon-symplectic, rigidity, non-squeezing, non-expanding, capacity, shape, contact, symplectic integrator, topological quantum computing}
\begin{document}
\thispagestyle{plain}

\begin{abstract}
An embedding $\varphi \colon (M_1, \omega_1) \to (M_2, \omega_2)$ (of symplectic manifolds of the same dimension) is called $\epsilon$-symplectic if the difference $\varphi^* \omega_2 - \omega_1$ is $\epsilon$-small with respect to a fixed Riemannian metric on $M_1$.
We prove that if a sequence of $\epsilon$-symplectic embeddings converges uniformly (on compact subsets) to another embedding, then the limit is $E$-symplectic, where the number $E$ depends only on $\epsilon$ and $E (\epsilon) \to 0$ as $\epsilon \to 0$.
This generalizes $C^0$-rigidity of symplectic embeddings, and answers a question in topological quantum computing by Michael Freedman.

As in the symplectic case, this rigidity theorem can be deduced from the existence and properties of symplectic capacities.
An $\epsilon$-symplectic embedding preserves capacity up to an $\epsilon$-small error, and linear $\epsilon$-symplectic maps can be characterized by the property that they preserve the symplectic spectrum of ellipsoids (centered at the origin) up to  an error that is $\epsilon$-small.

We sketch an alternative proof using the shape invariant, which gives rise to an analogous characterization and rigidity theorem for $\epsilon$-contact embeddings.
\end{abstract}

\maketitle

\section{Introduction and main results} \label{sec:intro}

In this paper, we consider smooth manifolds $M$ equipped with a symplectic structure $\omega$ and a Riemannian metric $g$.
We do not necessarily assume that the metric is compatible with the symplectic structure, or that the induced volume forms coincide (up to a constant multiple), though some of the estimates in this article are more explicit in those cases.

\begin{dfn}[Epsilon-symplectic and epsilon-anti-symplectic]
Let $(M_1, \omega_1)$ and $(M_2, \omega_2)$ be two symplectic manifolds of the same dimension, $g$ be a Riemannian metric on $M_1$, and $\epsilon \ge 0$.
An embedding $\varphi \colon M_1 \to M_2$ is called $\epsilon$-symplectic if $\| \varphi^* \omega_2 - \omega_1 \|_2 \le \epsilon$, and $\epsilon$-anti-symplectic if $\| \varphi^* \omega_2 + \omega_1 \|_2 \le \epsilon$. \qed
\end{dfn}

See section~\ref{sec:norms} for the definition of the norm $\| \cdot \|_2$ and a number of general related results.
A goal of this paper is to prove the following rigidity theorem.

\begin{thm} \label{thm:eps-symp-rig}
Let $(M_1, \omega_1)$ and $(M_2, \omega_2)$ be two symplectic manifolds of the same dimension, and $g$ be a Riemannian metric on $M_1$.
Then there are constants $\delta = \delta (\omega_1, g) > 0$ and $E = E (\omega_1, g, \epsilon) \ge 0$ with $E \to 0^+$ as $\epsilon \to 0^+$ so that, if $\epsilon < \delta$ and $\varphi_k \colon M_1 \to M_2$ is a sequence of $\epsilon$-symplectic embeddings that converges uniformly (on compact subsets) to an embedding $\varphi \colon M_1 \to M_2$, then $\varphi$ is $E$-symplectic.
\end{thm}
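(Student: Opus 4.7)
The plan is to adapt the classical Eliashberg--Gromov $C^0$-rigidity argument to the quantitative $\epsilon$-setting, with Gromov non-squeezing and the linear characterization of symplectic maps both replaced by the $\epsilon$-stable analogues advertised in the abstract. Since the condition $\|\varphi^*\omega_2-\omega_1\|_2 \le E$ is pointwise in nature, it suffices to fix $p \in M_1$ and bound $\|(\varphi^*\omega_2 - \omega_1)_p\|$, which equals $\|A^*(\omega_2)_{\varphi(p)} - (\omega_1)_p\|$ for $A := d\varphi(p)$.

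First I would localize by Darboux charts around $p$ and $\varphi(p)$, so that $\omega_1 = \omega_0$ and $\omega_2 = \omega_0$ identically on small balls in $\R^{2n}$, while the metric $g$ differs from its value at $p$ by $O(r)$ on a ball of radius $r$. Next I would linearize at $p$ by rescaling: define
\[
\tilde\varphi_{k,r}(x) = \frac{1}{r}\bigl(\varphi_k(p + rx) - \varphi(p)\bigr).
\]
A direct chain-rule computation shows that $\tilde\varphi_{k,r}^*\omega_0 - \omega_0$ at $x$ equals $\varphi_k^*\omega_0 - \omega_0$ at $p + rx$, so each rescaled map remains $\epsilon$-symplectic (the metric distortion is absorbed into an $o(1)$ error). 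Using the uniform convergence $\varphi_k \to \varphi$ together with the smoothness of $\varphi$, a diagonal choice $r_k \to 0$ produces a sequence of (almost) $\epsilon$-symplectic embeddings converging uniformly on compact subsets of $\R^{2n}$ to the linear map $A$. The theorem thus reduces to the linear statement: if $A \in \mathrm{GL}(2n, \R)$ is a uniform-on-compacts limit of $\epsilon$-symplectic embeddings of subsets of $(\R^{2n},\omega_0)$, then $A$ is $E$-symplectic with $E \to 0$ as $\epsilon \to 0$.

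For the linear case I would invoke an $\epsilon$-stable non-squeezing inequality: every $\epsilon$-symplectic embedding preserves symplectic capacities up to an $\epsilon$-small additive error. Passing to the limit yields that $A$ sends each centered ellipsoid to an ellipsoid whose entire symplectic spectrum is $O(\epsilon)$-close to the original. The linear characterization announced in the abstract then forces $A$ itself to be $E$-symplectic or $E$-anti-symplectic. The anti-symplectic alternative is eliminated by a continuity-of-sign argument: for $\epsilon$ smaller than a definite fraction of $\|(\omega_1)_p\|$, no 2-form can simultaneously $\epsilon$-approximate $\omega_1$ and $-\omega_1$ at $p$, and this dichotomy is preserved under uniform limits.

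The main obstacle is the pair of quantitative foundational results just invoked. Classical Gromov non-squeezing proceeds via pseudo-holomorphic curves for an almost-complex structure \emph{strictly} compatible with $\omega_0$; obtaining the $\epsilon$-stable version requires controlling how the moduli, areas, and compactness of these curves behave when the taming form is only approximately preserved, and showing that the resulting distortion of capacity is bounded by an explicit function of $\epsilon$. The linear spectrum characterization in turn requires a quantitative stability of Williamson's normal form, and extracting a clean bound $E(\epsilon) \to 0$ from these two ingredients is the technical heart of the argument.
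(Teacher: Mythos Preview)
Your outline matches the paper's strategy closely: localize via Darboux, rescale to reduce to a linear map, use an $\epsilon$-stable capacity/non-squeezing statement plus a linear characterization (Proposition~\ref{pro:eps-non-squeez-expand} and Theorem~\ref{thm:eps-non-squeez-expand} in the paper), and then rule out the anti-symplectic branch. The only structural difference is that the paper first passes to the limit $\varphi$, observes that the property ``preserves capacity of ellipsoids up to $\epsilon$'' survives uniform limits, and \emph{then} rescales $\varphi$ to reach $d\varphi(p)$; you instead rescale the sequence $\varphi_k$ and diagonalize. Both work.

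Two points deserve correction. First, you overestimate the cost of the $\epsilon$-non-squeezing input. No reworking of $J$-holomorphic curve theory for approximately tamed almost complex structures is needed. The paper obtains it by a Moser argument (Lemma~\ref{lem:eps-symp-approx-symp}): on a ball, an $\epsilon$-symplectic embedding $\varphi$ can be precomposed with an explicit $C^0$-small correction $\psi$ so that $\varphi\circ\psi$ is genuinely symplectic, and then ordinary Gromov non-squeezing applies. The distortion of capacities is controlled by how much $\psi$ moves points, which is governed directly by $\epsilon$ via the quantitative Poincar\'e Lemma. This is far more elementary than what you propose, and it is what makes the constants explicit.

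Second, your elimination of the anti-symplectic alternative has a gap. You argue that ``no $2$-form can simultaneously $\epsilon$-approximate $\omega_1$ and $-\omega_1$ at $p$, and this dichotomy is preserved under uniform limits.'' But uniform convergence gives no control on $\varphi_k^*\omega_2$ versus $\varphi^*\omega_2$ at a point; there is no $C^1$-convergence to exploit, so the pointwise incompatibility does not transfer. The paper instead uses a \emph{topological} invariant that does survive $C^0$-limits of embeddings: for small $\epsilon$ each $\varphi_k$ is orientation preserving (since $\varphi_k^*\omega_2^n$ is close to $\omega_1^n$), hence so is $\varphi$; when $n$ is odd this rules out $E$-anti-symplectic, and for $n$ even one stabilizes by $\varphi\times\mathrm{id}$ on $M_1\times\R^2$. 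An alternative is a cohomological argument via a Lagrangian torus through $p$.
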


In case $(M_1, \omega_1)$ and $(M_2, \omega_2)$ are both subsets of $(\R^{2 n}, \omega_0)$ with its standard symplectic structure and standard flat metric, an explicit lower bound for $\delta$ and explicit upper bound for $E$ can be derived from the proof given below.

\begin{cor} \label{cor:eps-symp-rig}
Let $(M_1, \omega_1)$ and $(M_2, \omega_2)$ be two symplectic manifolds of the same dimension, and $\epsilon_k \ge 0$ be a sequence of non-negative numbers so that $\epsilon_k \to 0^+$ as $k \to \infty$.
Suppose that $\varphi_k \colon M_1 \to M_2$ is a sequence of embeddings that converges uniformly (on compact subsets) to another embedding $\varphi \colon M_1 \to M_2$, and that each $\varphi_k$ is $\epsilon_k$-symplectic.
Then the limit $\varphi$ is a symplectic embedding.
\end{cor}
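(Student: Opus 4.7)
The plan is to deduce Corollary~\ref{cor:eps-symp-rig} from Theorem~\ref{thm:eps-symp-rig} by a direct limiting argument. The corollary is strictly weaker than the theorem in that it only demands the limit be exactly symplectic, whereas the theorem controls quantitatively how close to symplectic the limit must be for fixed $\epsilon$. Consequently, once the theorem is in hand, the corollary becomes a routine matter of letting the tolerance tend to zero.

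First I would fix a Riemannian metric $g$ on $M_1$ (the existence of which is automatic, since $M_1$ is paracompact), and let $\delta = \delta(\omega_1, g) > 0$ and $E(\cdot) = E(\omega_1, g, \cdot)$ be the constants supplied by Theorem~\ref{thm:eps-symp-rig}, so in particular $E(\epsilon) \to 0^+$ as $\epsilon \to 0^+$. Given an arbitrary $\epsilon \in (0, \delta)$, the hypothesis $\epsilon_k \to 0^+$ furnishes an index $N = N(\epsilon)$ such that $\epsilon_k \le \epsilon$ for all $k \ge N$. Each such $\varphi_k$ is then $\epsilon$-symplectic, since
\begin{equation*}
\| \varphi_k^* \omega_2 - \omega_1 \|_2 \le \epsilon_k \le \epsilon.
\end{equation*}
The tail $(\varphi_k)_{k \ge N}$ is therefore a sequence of $\epsilon$-symplectic embeddings converging uniformly on compact subsets to $\varphi$, so Theorem~\ref{thm:eps-symp-rig} applies and yields
\begin{equation*}
\| \varphi^* \omega_2 - \omega_1 \|_2 \le E(\epsilon).
\end{equation*}

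Since $\epsilon \in (0,\delta)$ was arbitrary and $E(\epsilon) \to 0^+$ as $\epsilon \to 0^+$, taking the infimum over $\epsilon$ forces $\| \varphi^* \omega_2 - \omega_1 \|_2 = 0$. Because $\| \cdot \|_2$ is a norm on the space of $2$-forms on $M_1$ (this is the content I would invoke from section~\ref{sec:norms}), this identity upgrades to $\varphi^* \omega_2 = \omega_1$ pointwise, so $\varphi$ is a symplectic embedding.

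There is essentially no obstacle beyond Theorem~\ref{thm:eps-symp-rig} itself; the only point worth flagging is that one should not attempt to apply Theorem~\ref{thm:eps-symp-rig} directly to the original sequence with a single fixed $\epsilon$ chosen at the outset, because the $\epsilon_k$ are merely assumed to tend to zero, not to be bounded by any specific value from the start. The device of discarding an initial segment of the sequence (which does not affect uniform convergence on compact subsets to $\varphi$) circumvents this, and then monotonicity of the norm estimate under shrinking $\epsilon$ delivers the conclusion.
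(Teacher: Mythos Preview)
Your proof is correct and follows essentially the same approach as the paper's: the paper's one-line argument reads ``Since $E \to 0$ as $\epsilon \to 0$, taking a subsequence of the sequence $\varphi_k$ and applying Theorem~\ref{thm:eps-symp-rig} shows that $\varphi$ is $E$-symplectic for any $E > 0$,'' which is exactly your device of passing to a tail with $\epsilon_k \le \epsilon$ and then letting $\epsilon \to 0$. Your version is simply more explicit about the details.
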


The choice of Riemannian metric on $M_1$ is not relevant for the corollary.
See Remark~\ref{rmk:arbitrary-metric}.
Analogous to the symplectic case, we show that an embedding is $\epsilon$-symplectic or $\epsilon$-anti-symplectic if and only if it preserves the capacity of ellipsoids up to an $\epsilon$-small error.
Most of the paper is devoted to establishing its linear version on $\R^{2 n}$ with its standard symplectic structure and Riemannian metric.

\begin{pro} \label{pro:eps-non-squeez-expand}
Let $0 \le \epsilon < 1 / \sqrt{2}$, and $\epsilon' = \sqrt{2} \, \epsilon$.
Then an $\epsilon$-symplectic linear map $\Phi \colon \R^{2 n} \to \R^{2 n}$ is $\epsilon'$-non-squeezing and $\epsilon'$-non-expanding.
\end{pro}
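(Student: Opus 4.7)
The plan is to adapt the classical linear-algebraic proof of Gromov's non-squeezing theorem, carefully tracking how the $\epsilon$-symplectic hypothesis enters each step.

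First I would write the hypothesis in matrix form. Let $\Omega$ denote the matrix of $\omega_0$ in the standard basis, and set $E := \Phi^T \Omega \Phi - \Omega$. Then $E$ is antisymmetric, and since $E$ is the matrix of the $2$-form $\eta := \Phi^*\omega_0 - \omega_0$, the identity $\|E\|_F = \sqrt{2}\,\|\eta\|_2$ (which holds for every antisymmetric matrix, because each off-diagonal form coefficient contributes to two matrix entries) gives $\|E\|_F \le \sqrt{2}\,\epsilon = \epsilon'$; this is the one place the factor $\sqrt{2}$ is imported. Since $\Omega$ is orthogonal and $\|E\|_{\mathrm{op}} \le \|E\|_F \le \epsilon' < 1$, a geometric series yields $\|(\Omega+E)^{-1}\|_{\mathrm{op}} \le 1/(1-\epsilon')$.

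To prove $\epsilon'$-non-squeezing, I would assume $\Phi(B(r)) \subset Z(R)$, which forces $\|\pi\Phi\|_{\mathrm{op}} \le R/r$, where $\pi$ is the orthogonal projection onto $V = \mathrm{span}(e_1, f_1)$. Transposing yields $|\Phi^T e_1|, |\Phi^T f_1| \le R/r$. Rearranging $\Phi^T\Omega\Phi = \Omega + E$ into $\Phi^{-1} = (\Omega+E)^{-1}\Phi^T\Omega$, and using $\Omega e_1 = -f_1$, $\Omega f_1 = e_1$ together with the operator-norm bound on $(\Omega+E)^{-1}$, I obtain
\[
|\Phi^{-1} e_1|,\; |\Phi^{-1} f_1| \;\le\; \frac{R}{r(1-\epsilon')}.
\]
The classical symplectic argument would conclude from $\omega_0(\Phi^{-1}e_1, \Phi^{-1}f_1) = 1$; here the identity $\omega_0(u,v) = \omega_0(\Phi^{-1}u, \Phi^{-1}v) + \eta(\Phi^{-1}u, \Phi^{-1}v)$ (obtained by substituting $\Phi^{-1}u, \Phi^{-1}v$ into $\Phi^*\omega_0 = \omega_0 + \eta$) gives instead
\[
1 \;=\; \omega_0(\Phi^{-1}e_1, \Phi^{-1}f_1) + \eta(\Phi^{-1}e_1, \Phi^{-1}f_1).
\]
Applying the bilinear bounds $|\omega_0(u,v)| \le |u||v|$ and $|\eta(u,v)| \le \|E\|_{\mathrm{op}}|u||v| \le \epsilon'|u||v|$ yields an inequality of the form $1 \le (1+\epsilon')\bigl(R/(r(1-\epsilon'))\bigr)^2$, which rearranges to the desired non-squeezing estimate. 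The $\epsilon'$-non-expanding statement I would establish by a parallel argument, starting from the dual hypothesis and running the analogous rearrangement of $\Phi^T\Omega\Phi = \Omega + E$ to relate $\Phi$ to $\Phi^{-T}$ in place of $\Phi^{-1}$ to $\Phi^T$.

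The main obstacle will be the bookkeeping of constants: the $\sqrt{2}$ must enter exactly once, at the norm identification $\|E\|_F = \sqrt{2}\|\eta\|_2$, and flow unchanged through the subsequent Neumann-series and bilinear estimates, so that the final constant comes out as $\epsilon' = \sqrt{2}\epsilon$ rather than a worse multiple. The hypothesis $\epsilon < 1/\sqrt{2}$ is precisely what guarantees $\epsilon' < 1$, which keeps $\Omega+E$ invertible by geometric series and prevents the above bounds from degenerating; no use is made of symplectic capacities, only of elementary linear algebra.
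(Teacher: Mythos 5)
Your proposal misidentifies the target. The phrases ``$\epsilon'$-non-squeezing'' and ``$\epsilon'$-non-expanding'' in Proposition~\ref{pro:eps-non-squeez-expand} are not shorthand for a ball-into-cylinder inequality; they are the precise properties of Definition~\ref{dfn:eps-non-squeeze-expand}, which compare the linear symplectic width $r_1$ of an arbitrary ellipsoid $E(A)$ to the linear symplectic width $R_1$ of its image $\Phi E(A)$, up to the ellipsoid-dependent factors $s_A = (1 + \| A^{-1}\|(\rho^{-1}-1) r_A)^{-1}$ and $e_A = (1 - \| A^{-1}\|(\rho^{-1}-1) r_A)^{-1}$ with $\rho = \sqrt{1-\epsilon'}$. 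What you actually argue --- assuming $\Phi(B(r)) \subset Z(R)$, passing to $\Phi^{-1} = (\Omega+E)^{-1}\Phi^T\Omega$, and deriving $1 \le (1+\epsilon')\bigl(R/(r(1-\epsilon'))\bigr)^2$ --- is a ball-into-cylinder stretch bound, closer in spirit to Proposition~\ref{pro:eps-non-squeez}, and it is silent on the $s_A$, $e_A$ inequalities that actually constitute the statement. Your concluding remark that ``no use is made of symplectic capacities'' is a symptom of this: the paper's Definition~\ref{dfn:eps-non-squeeze-expand} is phrased in terms of the linear symplectic width, i.e.\ the restriction of any capacity to ellipsoids, and the proof uses capacity monotonicity in an essential way.

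The paper's own proof takes a genuinely different route: it applies Lemma~\ref{lem:eps-symp-approx-symp} (Moser's trick, made quantitative via the Poincar\'e Lemma) to produce a correction $\psi$ with $\|\psi(x)-x\|_2 \le (\rho^{-1}-1)\|x\|_2$ so that $\Phi\circ\psi$ is genuinely symplectic, then checks the set inclusions $\psi(E(s_A A)) \subset E(A)$ and $\psi(E(e_A A)) \supset E(A)$ directly from that displacement bound, and finishes by monotonicity of the linear symplectic width under the symplectic map $\Phi\circ\psi$. The exponent $\rho = \sqrt{1-\epsilon'}$ and the exact shapes of $s_A$, $e_A$ are manufactured precisely so that this chain closes; your matrix-Neumann-series estimate has no reason to reproduce them, even approximately. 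Your observation that $\|E\|_F = \sqrt{2}\|\eta\|_2$ is correct and explains where the $\sqrt{2}$ comes from, and the step $\Phi^{-1} = (\Omega+E)^{-1}\Phi^T\Omega$ with $\|(\Omega+E)^{-1}\|_{\mathrm{op}}\le 1/(1-\epsilon')$ is sound as far as it goes --- but these are components of a proof of a different (and in this paper, separate) statement, not of Proposition~\ref{pro:eps-non-squeez-expand}. To repair the proposal you would need to start from Definition~\ref{dfn:eps-non-squeeze-expand}, and the most natural way to produce $s_A$, $e_A$ with the advertised form is exactly the Moser-plus-capacity argument the paper gives.
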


The constant $1 / \sqrt{2}$ is not optimal; see section~\ref{sec:norms} for details.
By Remark~\ref{rmk:non-squeez-fail} below, there is no form of non-squeezing for $\epsilon$-symplectic embeddings with $\epsilon \ge 1$.

\begin{thm} \label{thm:eps-non-squeez-expand}
Suppose a linear map $\Phi \colon \R^{2 n} \to \R^{2 n}$ has the linear $\epsilon$-non-squeezing and linear $\epsilon$-non-expanding property.
Then for $\epsilon \ge 0$ sufficiently small, $\Phi$ is either $\epsilon'$-symplectic or $\epsilon'$-anti-symplectic, where $\epsilon' = K (\epsilon) \to 0^+$ as $\epsilon \to 0^+$.
\end{thm}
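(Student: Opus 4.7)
The plan is to reduce the theorem to analysis of the skew-symmetric matrix $N := \Phi^T J \Phi = \Phi^* \omega_0$ (with $J$ the matrix of $\omega_0$); I aim to show that $\| N - \sigma J \|_2 \le K(\epsilon)$ for some $\sigma \in \{+1, -1\}$, which is exactly $\epsilon'$-symplecticity (when $\sigma = +1$) or $\epsilon'$-anti-symplecticity (when $\sigma = -1$), with $\epsilon' = K(\epsilon)$. By the real spectral theorem for skew-symmetric matrices, there is $O \in \O(2 n)$ with $O^T N O = \bigoplus_{i=1}^n \lambda_i J_2$, where $J_2 = \begin{pmatrix} 0 & 1 \\ -1 & 0 \end{pmatrix}$ and $\lambda_i \in \R$. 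Since $O$ preserves $\|\cdot\|_2$ on bilinear forms, and replacing $\Phi$ by $\Phi O$ only relabels the family of test ellipsoids on which the non-squeezing and non-expanding hypotheses are assumed to hold, I may assume $N$ is in this normal form and work directly with the parameters $\lambda_1, \ldots, \lambda_n \in \R$.

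For any positive-definite symmetric $A$, write $A' := \Phi^{-T} A \Phi^{-1}$, so $\Phi(E(A)) = E(A')$. Using $\Phi^T J \Phi = N$, one checks that $J A'^{-1}$ is conjugate to $N A^{-1}$, so the symplectic spectrum of $\Phi(E(A))$ is computed from the eigenvalues of $N A^{-1}$ in place of $J A^{-1}$. Taking $A$ block-diagonal and strongly anisotropic (so as to probe a single block at a time), the $\epsilon$-non-squeezing and $\epsilon$-non-expanding hypotheses applied to $E(A)$ force $|\lambda_k| \in [1 - K_1(\epsilon), 1 + K_1(\epsilon)]$ for every $k$, with $K_1(\epsilon) \to 0^+$. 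This pins down the moduli $|\lambda_k|$ but leaves the signs unconstrained.

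The main obstacle is the sign dichotomy. Block-diagonal ellipsoids only see the moduli $|\lambda_k|$, so detecting signs requires coupled ellipsoids. For each pair $i \ne j$ and $t \in (0, 1)$, let $A(t)$ equal the identity off the $(i, j)$-blocks and $\begin{pmatrix} I_2 & t J_2 \\ - t J_2 & I_2 \end{pmatrix}$ on the coupled $4$-plane. A $4 \times 4$ characteristic-polynomial computation in this subspace yields that the eigenvalues of $J A(t)^{-1}$ are $\pm i / (1 \pm t)$; those of $N A(t)^{-1}$ agree with these up to $K_1(\epsilon)$-error when $\lambda_i \lambda_j > 0$, but become $\pm i / \sqrt{1 - t^2}$ (each doubled) when $\lambda_i \lambda_j < 0$. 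At $t = 1/2$ this yields a capacity discrepancy between $\Phi(E(A(t)))$ and $E(A(t))$ bounded below by an absolute constant (numerically of order $1$), contradicting $\epsilon$-non-expanding for $\epsilon$ sufficiently small. Hence all $\lambda_k$ share a common sign $\sigma$, and combined with the previous paragraph this gives $\| N - \sigma J \|_2 \le K(\epsilon) \to 0^+$. The principal technical difficulty is this coupled computation: verifying that the capacity gap at $t = 1/2$ persists uniformly as $|\lambda_i|, |\lambda_j|$ range over the interval $[1 - K_1(\epsilon), 1 + K_1(\epsilon)]$ around $1$, rather than sitting at exactly $\pm 1$, and carrying the argument simultaneously over all pairs $i \ne j$.
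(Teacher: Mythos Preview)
Your reduction ``replacing $\Phi$ by $\Phi O$ only relabels the family of test ellipsoids'' is the gap. The orthogonal matrix $O$ that puts $N = \Phi^T J \Phi$ into block form is in general \emph{not} symplectic, and a non-symplectic orthogonal map does not preserve linear symplectic width. Concretely, $(\Phi O) E(A) = \Phi(O E(A))$, and applying the hypothesis on $\Phi$ to the ellipsoid $O E(A)$ yields $s_A \cdot \mathrm{width}(O E(A)) \le \mathrm{width}(\Phi O E(A))$; but $\mathrm{width}(O E(A)) \ne \mathrm{width}(E(A))$ unless $O \in \U(n)$, so the $\epsilon$-non-squeezing property does not transfer to $\Phi O$. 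Equivalently: you have arranged $N$ to be block-diagonal, but in that same basis $J$ is \emph{not} block-diagonal, so while the eigenvalues of $N A^{-1}$ (output width) are easy, the eigenvalues of $J A^{-1}$ (input width) are not. Both your step pinning down $|\lambda_k|$ and your coupled-ellipsoid sign argument silently assume $J$ is standard in the $N$-basis, which is essentially the conclusion you are trying to prove.

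The paper confronts exactly this obstruction. It uses the same skew-normal form for $\Phi^*\omega_0$ to get an orthonormal basis $\{u_j, v_j\}$ and parameters $\lambda_j$, but then composes $\Phi$ on the \emph{left} with a symplectic matrix (which is legitimate by Remark~\ref{rmk:eps-non-squeezing-comp-symp}) rather than on the right with $O$. It then introduces a second family of parameters $\mu_j = \sqrt{|\omega_0(u_j, v_j)|}$ measuring how far $\omega_0$ is from block-diagonal in the $\{u_j, v_j\}$-basis, and computes the input width of the anisotropic test ellipsoid via a direct geometric argument, obtaining $r_1^2 = a^2 \mu^2 + a(1 - \mu^2)$ (which reduces to your implicit assumption $r_1 = a$ only when $\mu = 1$). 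The bulk of the proof is then a two-variable analysis forcing $\lambda_j \to 1$ and $\mu_j \to 1$ simultaneously as $\epsilon \to 0$; the sign consistency is handled separately by a standard squeezing argument rather than a coupled eigenvalue computation.
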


See sections~\ref{sec:eps-symp-r2n} and \ref{sec:lin-eps-non-squeez} for details.
Symplectic capacities are discussed in section~\ref{sec:rigidity}.
A geometric expression of $\epsilon$-symplectic rigidity is the following generalization of Gromov's non-squeezing theorem.
Consider again $\R^{2 n}$ with its standard symplectic structure $\omega_0$.
Denote by $B_r^{2 n} \subset \R^{2 n}$ the (closed) ball of radius $r > 0$ (centered at the origin), and by $Z_R^{2 n} = B_R^2 \times \R^{2 n - 2}$ the (symplectic) cylinder of radius $R > 0$.

\begin{pro}[Epsilon-non-squeezing] \label{pro:eps-non-squeez}
If there is an $\epsilon$-symplectic embedding of $B_r^{2 n}$ into $Z_R^{2 n}$, with $0 \le \epsilon < 1 / \sqrt{2}$, then $r \le (1 - \sqrt{2} \, \epsilon)^{- \sqrt{2 n}} R$.
\end{pro}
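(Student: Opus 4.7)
The plan is to deduce the bound from a quantitative monotonicity of a symplectic capacity under $\epsilon$-symplectic embeddings, combined with the linear estimate of Proposition~\ref{pro:eps-non-squeez-expand}.

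Fix any symplectic capacity $c$ on open subsets of $(\R^{2 n}, \omega_0)$ normalized so that $c (B_1^{2 n}) = c (Z_1^{2 n}) = \pi$; the Gromov width and the Hofer--Zehnder capacity both satisfy this, the non-trivial inequality $c (Z_1^{2 n}) \le \pi$ being classical non-squeezing. Then $c (B_r^{2 n}) = \pi r^2$ and $c (Z_R^{2 n}) = \pi R^2$, and for $\epsilon = 0$ the statement reduces to the standard monotonicity $c (\varphi (U)) \ge c (U)$.

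The key step I would carry out is the $\epsilon$-analog
\[
c (\varphi (U)) \ge (1 - \sqrt{2} \, \epsilon)^{2 \sqrt{2 n}} \, c (U)
\]
for every $\epsilon$-symplectic embedding $\varphi \colon U \to V$ with $0 \le \epsilon < 1 / \sqrt{2}$. Applied to the embedding $\varphi \colon B_r^{2 n} \to Z_R^{2 n}$, this yields
\[
(1 - \sqrt{2} \, \epsilon)^{2 \sqrt{2 n}} \, \pi r^2 \le c (\varphi (B_r^{2 n})) \le c (Z_R^{2 n}) = \pi R^2,
\]
and taking square roots gives exactly $r \le (1 - \sqrt{2} \, \epsilon)^{- \sqrt{2 n}} R$.

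The main obstacle will be the approximate monotonicity itself: capacities are defined using genuine symplectic embeddings, so one must convert an $\epsilon$-symplectic embedding into a symplectic one on a slightly smaller set at a controlled loss. Proposition~\ref{pro:eps-non-squeez-expand} supplies this at the linear level, via the distortion of the symplectic spectrum of origin-centered ellipsoids by a factor governed by $\epsilon' = \sqrt{2} \, \epsilon$. I expect the nonlinear step to proceed by approximating $\varphi$ locally by its linear parts and then invoking the capacity machinery of section~\ref{sec:rigidity}; the exponent $2 \sqrt{2 n}$ should arise when passing from the pointwise $\| \cdot \|_2$-bound of section~\ref{sec:norms} on the antisymmetric $2$-tensor $\varphi^* \omega_0 - \omega_0$ in dimension $2 n$ to axis-by-axis control over the $n$ symplectic pairs, the factor $\sqrt{2 n}$ being the usual ratio between Hilbert--Schmidt and operator norms in this dimension.
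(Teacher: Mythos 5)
Your reduction is sound: if one has the approximate monotonicity
\[
c\bigl(\varphi(B_r^{2n})\bigr) \ge (1 - \sqrt{2}\,\epsilon)^{2\sqrt{2n}}\, c(B_r^{2n})
\]
for $\epsilon$-symplectic embeddings and any normalized capacity $c$, the stated bound follows immediately from $c(Z_R^{2n}) = \pi R^2$, exactly as you write. The factor you posit is also correct for balls, as it equals $\rho^2$ with $\rho = (1-\sqrt{2}\,\epsilon)^{\sqrt{2n}}$. However, the argument you sketch for establishing that inequality would not go through, and this is where the real content of the proposition lies.

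The correct tool is Lemma~\ref{lem:eps-symp-approx-symp}, which you do not invoke. That lemma produces, via a global Moser isotopy on the ball, a (generally nonlinear) embedding $\psi \colon B_{\rho r}^{2n} \to B_r^{2n}$ with $\varphi \circ \psi$ genuinely symplectic; composing with $\varphi$ and applying ordinary non-squeezing (or ordinary capacity monotonicity) to $\varphi\circ\psi \colon B_{\rho r}^{2n} \to Z_R^{2n}$ gives $\rho r \le R$ and hence the claim. Your proposal instead suggests ``approximating $\varphi$ locally by its linear parts and then invoking the capacity machinery,'' citing Proposition~\ref{pro:eps-non-squeez-expand}. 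This cannot work as stated: $\epsilon$-symplectic is already a pointwise condition on $d\varphi(x)$, so local linearization provides no new information, and there is no mechanism given for assembling pointwise linear control into a single symplectic embedding of a large ball at a quantified loss. Moreover, Proposition~\ref{pro:eps-non-squeez-expand} is itself a \emph{consequence} of Lemma~\ref{lem:eps-symp-approx-symp}, so leaning on it here inverts the logical order. Finally, your heuristic for the exponent $\sqrt{2n}$ — a ratio of Hilbert–Schmidt and operator norms — is not where it comes from: it arises from integrating the Gronwall-type differential inequality $|\tfrac{d}{dt}\|x(t)\|| \le C(t)\|x(t)\|$ along the Moser isotopy, with $C(t) = \sqrt{2n}\cdot\sqrt{2}\,\epsilon\,/\,(1-\sqrt{2}\,\epsilon t)$, whose integral is $-\sqrt{2n}\log(1-\sqrt{2}\,\epsilon t)$.
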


Recall that Gromov's non-squeezing theorem (the case $\epsilon = 0$) can be considered as a geometric expression of the uncertainty principle \cite[page 458]{mcduff:ist17}.
Given a point $(x_1, y_1, \ldots, x_n, y_n)$ in $\R^{2 n} = T^* \R^n$, think of $x_j$ as the $j$-th position coordinate and $y_j$ as the $j$-th momentum coordinate of some Hamiltonian system.
If the state of the system is measured to lie somewhere in a subset $U \subset \R^{2 n}$ that is (or contains) a ball of radius $r$, then the range of uncertainty (to the extend of our knowledge) of the values of the conjugate pair $(x_j, y_j)$ is the area $\pi r^2$.
Proposition~\ref{pro:eps-non-squeez} then means that if the system is transformed by an $\epsilon$-symplectic diffeomorphism, this range of uncertainty can be decreased by a factor of at most $(1 - \sqrt{2} \, \epsilon)^{2 n}$.

The results of this paper are of interest in symplectic integrator methods and topological quantum computing, where computations can be performed up to any prescribed level of accuracy only.
The question by Michael Freedman \cite{freedman:email17} was the starting point of this paper.
Corollary~\ref{cor:eps-symp-rig} is also relevant in $C^0$-symplectic topology.

For most of the paper, we assume that $M$ is compact or a relatively compact subset $U$ of $\R^{2 n}$.
In the latter case, we also assume that there exists a Riemannian metric $\overline{g}$ defined on a neighborhood of $\overline{U}$ such that $\overline{g}_{|_U} = g$.
In particular, all of the supremums considered below are in fact maximums, and in particular, are finite.
See the (first paragraph of the) proof of Theorem~\ref{thm:eps-symp-rig} in section~\ref{sec:rigidity} for the case of non-compact manifolds.
An alternate argument using the shape invariant, and $\epsilon$-contact embeddings, are discussed in the final section~\ref{sec:shape-contact}.

\section{Norms of vector fields and differential forms} \label{sec:norms}
The Riemannian metric $g$ induces a norm on each tangent space $T_x M$ given by $\| v \|_2 = \sqrt{g (v, v)}$ for $v \in T_x M$.
The norm of a vector field $X$ on $M$ is then defined by $\| X \|_2 = \sup_{x \in M} \| X (x) \|_2$.
Let $\star$ denote the Hodge star of the metric $g$.
Then for a $k$-covector $v^* \in \Lambda^k (T_x M)$, let $\| v^* \|_2 = \sqrt{\star (v^* \wedge \star v^*)}$, and for a differential form $\beta$, define $\| \beta \|_2 = \sup_{x \in M} \| \beta (x) \|_2$.
We can also define the comass norms
\[ \| v^* \|_C = \sup \{ v^* (v_1, \ldots, v_k) \mid \| v_1 \|_2 = \cdots = \| v_k \|_2 = 1 \} \]
and $\| \beta \|_C = \sup_{x \in M} \| \beta (x) \|_C$.
The norms $\| \cdot \|_2$ and $\| \cdot \|_C$ are in fact equivalent.
We sketch a proof to the degree necessary for our purposes.
See \cite[Chapter~1]{federer:gmt69} for instance for details.

\begin{lem} \label{lem:norms-equiv}
Let $m = \dim M$.
Then $\| \cdot \|_C \le \| \cdot \|_2 \le \sqrt{m \choose k} \| \cdot \|_C$ for any $k$-covector and any $k$-form, and $\| v^* \|_C = \| v^* \|_2$ if and only if the $k$-covector $v^*$ is simple.
\end{lem}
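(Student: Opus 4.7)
The plan is to work pointwise: fix $x \in M$ and an orthonormal basis $e_1, \ldots, e_m$ of $T_xM$ with dual basis $e^1, \ldots, e^m$. Any $k$-covector admits a unique expansion $v^* = \sum_I a_I e^I$ over increasing multi-indices $I = (i_1 < \cdots < i_k)$, with $e^I = e^{i_1} \wedge \cdots \wedge e^{i_k}$, and since $\{e^I\}$ is orthonormal for the induced inner product on $\Lambda^k(T_x^*M)$, one has $\|v^*\|_2^2 = \sum_I a_I^2$. The inequalities for $k$-forms then follow from the pointwise inequalities by taking supremum over $x \in M$, so it suffices to argue at a point.

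For $\|v^*\|_C \le \|v^*\|_2$, I would use that $v^*(v_1,\ldots,v_k) = \langle v^*, v_1 \wedge \cdots \wedge v_k\rangle$ in $\Lambda^k$, apply Cauchy--Schwarz to bound this by $\|v^*\|_2 \, \|v_1 \wedge \cdots \wedge v_k\|_2$, and finally observe that the Gram identity $\|v_1 \wedge \cdots \wedge v_k\|_2^2 = \det(g(v_i,v_j))$ together with Hadamard's inequality gives $\|v_1 \wedge \cdots \wedge v_k\|_2 \le \prod_j \|v_j\|_2 = 1$. For $\|v^*\|_2 \le \sqrt{\binom{m}{k}}\,\|v^*\|_C$, I would note that the coefficient $a_I = v^*(e_{i_1},\ldots,e_{i_k})$ satisfies $|a_I| \le \|v^*\|_C$ because the $e_{i_j}$ are orthonormal, and then sum the $\binom{m}{k}$ squared coefficients.

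For the equality case, the direction \emph{simple $\Rightarrow$ equality} is straightforward: if $v^* = u^1 \wedge \cdots \wedge u^k$ is simple, pick an orthonormal basis of $T_x^*M$ adapted to $\mathrm{span}(u^1,\ldots,u^k)$ so that $v^* = c\, e^1 \wedge \cdots \wedge e^k$; then $\|v^*\|_2 = |c|$ and $v^*(e_1,\ldots,e_k) = c$, forcing $\|v^*\|_C \ge |c|$ and hence equality with the already-proved upper bound. The main obstacle, and the direction that requires more care, is the converse \emph{equality $\Rightarrow$ simple}.

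Here my plan is a compactness-plus-optimization argument. The supremum in $\|v^*\|_C$ is achieved on the compact product of unit spheres by some $k$-tuple $v_1,\ldots,v_k$. Using that $v^*$ is alternating and multilinear, I would show that this maximizing tuple may be taken orthonormal: if some $v_j$ had a nonzero component along $\mathrm{span}(v_i : i \neq j)$, subtracting that component would not change $v^*(v_1,\ldots,v_k)$ (by alternation) but would decrease $\|v_j\|_2$, and renormalizing would \emph{strictly} increase the value, contradicting maximality. Extend $v_1,\ldots,v_k$ to an orthonormal basis $v_1,\ldots,v_m$ and expand $v^* = \sum_I a_I v^I$. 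Then the distinguished coefficient satisfies $a_{1 \cdots k} = v^*(v_1,\ldots,v_k) = \|v^*\|_C = \|v^*\|_2$, so the identity $\|v^*\|_2^2 = \sum_I a_I^2$ forces all other $a_I$ to vanish. Hence $v^* = a_{1\cdots k}\, v^1 \wedge \cdots \wedge v^k$ is simple, completing the argument.
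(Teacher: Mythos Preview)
Your argument is correct and, for the two inequalities, follows essentially the same route as the paper's sketch: expand in an orthonormal basis, identify $\|v^*\|_2$ with the Euclidean norm of the coefficient vector, and bound each coefficient $a_I = v^*(e_{i_1},\ldots,e_{i_k})$ by $\|v^*\|_C$. Your use of Cauchy--Schwarz plus Hadamard for $\|v^*\|_C \le \|v^*\|_2$ is the explicit version of the paper's duality remark that $\|v^*\|_2$ is the supremum over \emph{all} unit $k$-vectors while $\|v^*\|_C$ is the supremum over only the \emph{simple} unit $k$-vectors.

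The one substantive difference is the equality characterization. The paper's sketch only establishes the direction \emph{simple $\Rightarrow$ equality} and defers the converse to Federer. You supply a self-contained proof of \emph{equality $\Rightarrow$ simple} via the compactness/optimization argument: a maximizing $k$-tuple exists, must be orthonormal (else orthogonalizing one vector against the span of the others and renormalizing strictly increases the value, using alternation), and then the single coefficient in the adapted basis already exhausts $\|v^*\|_2^2$, forcing all others to vanish. This is a clean and standard argument; it genuinely adds content beyond the paper's sketch, at the cost of a few extra lines.
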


\begin{proof}[Sketch of proof]
Note that it suffices to prove the lemma for covectors.
The natural isomorphism $\gamma \colon T_x M \to T_x^* M$ given by $\gamma (v) = g (v, \cdot)$ extends to an isomorphism $\gamma \colon \Lambda_k (T_x M) \to \Lambda^k (T_x M)$ for each $k$, and thus the metric $g$ extends to a metric on the space of $k$-vectors given by $(v, w) \mapsto \gamma (v) (w)$.
The induced norm $\| \cdot \|_2$ on $k$-vectors is dual to the norm $\| \cdot \|_2$ for $k$-covectors.
In particular,
\[ \| v^* \|_2 = \sup \{ v^* (v) \mid v \in \Lambda_k (T_x M) \mbox{ with } \| v \|_2 = 1 \}, \]
whereas
\[ \| v^* \|_C = \sup \{ v^* (v_1 \wedge \ldots \wedge v_k) \mid \| v_1 \wedge \ldots \wedge v_k \|_2 = 1 \}, \]
i.e., the latter supremum is over all simple unit $k$-vectors only, where a $k$-(co-)vector is called simple if it is the (alternating) product of $1$-(co-)vectors.
That proves the first inequality, and the claim that the two norms coincide on simple $k$-covectors.

To prove the second inequality, choose an orthonormal basis $e_1, \ldots, e_m$ of $T_x M$, with dual orthonormal basis $\alpha_1, \ldots, \alpha_m$.
Let $v^*$ be a $k$-covector, and write
\begin{align} \label{eqn:covector}
v^* = \sum_\sigma f_\sigma \, \alpha_{\sigma (1)} \wedge \ldots \wedge \alpha_{\sigma (k)},
\end{align}
where the sum is over all strictly increasing functions $\sigma \colon \{ 1, \ldots, k \} \to \{ 1, \ldots, m \}$.
Let $d = {m \choose k}$, and choose some order on the set (with $d$ elements) of such functions.
Denote by $f_{v^*}$ the vector $( f_{\sigma_1}, \ldots, f_{\sigma_d} )$ in $\R^d$, equipped with the standard metric $g_0 = \langle \cdot, \cdot \rangle$.
Then it follows immediately from the definitions that $\| v^* \|_2 = \| f_{v^*} \|_2$.

Let $v_j = \sum_{i = 1}^m \lambda_{i j} e_i$, $1 \le j \le k$, be unit vectors in $T_x M$, and consider the ($m \times k$)-matrix $\Lambda = [ \lambda_{i j} ]_{1 \le i \le m, 1 \le j \le k}$.
Then
\[ v^* (v_1, \ldots, v_k) = \left\langle ( f_{\sigma_1}, \ldots, f_{\sigma_d} ), ( \det (M_{\sigma_1}), \ldots, \det (M_{\sigma_d}) ) \right\rangle, \]
where $M_\sigma$ is the $(k \times k)$-minor obtained from $\Lambda$ by deleting all but the rows in the image of the function $\sigma$.
(Geometrically, the minor $M_\sigma$ represents the linear transformation $\Lambda \circ \Pi_\sigma \colon \R^k \to \R^k$, where $\Pi_\sigma \colon \R^m \to \R^k$ denotes the projection to the components that belong to the image of $\sigma$.
In particular, the absolute value of its determinant can be interpreted as the hyper-volume of the image of a $k$-dimensional face of the unit cube.)
Let $N \le d$ be the number of non-zero terms in (\ref{eqn:covector}).
If we choose $\lambda_{i j} = \delta_{i \sigma (j)}$ for some $\sigma$, then $v^* (v_1, \ldots, v_k) = f_\sigma$.
Therefore $\| v^* \|_C \ge \max_\sigma | f_\sigma |$, and in particular, $\| v^* \|_2 \le \sqrt{N} \, \| v^* \|_C \le \sqrt{d} \, \| v^* \|_C$.
\end{proof}

We point out the following immediate consequence of the preceding proof.

\begin{lem} \label{lem:comass-est}
For every $k$-covector $v^*$ and every orthonormal basis $B$ of $T_x M$, there exist vectors $v_1, \ldots, v_k \in B$ such that $\| v^* \|_C \ge v^* (\pm v_1, v_2, \ldots, v_k) \ge {m \choose k}^{- 1 / 2} \| v^* \|_2$.
\end{lem}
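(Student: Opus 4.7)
The lemma is essentially a restatement of the penultimate step in the proof of Lemma~\ref{lem:norms-equiv}, specialized to the basis $B$, so my plan is to extract that argument and observe that it already furnishes the sought vectors.

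First I would fix the orthonormal basis $B = \{e_1, \ldots, e_m\}$ supplied in the hypothesis, take $\alpha_1, \ldots, \alpha_m$ as the dual orthonormal basis, and write $v^*$ in the form (\ref{eqn:covector}). As noted in the proof of Lemma~\ref{lem:norms-equiv}, this identification gives $\|v^*\|_2^2 = \sum_\sigma f_\sigma^2$, so by a pigeonhole argument on the $d = \binom{m}{k}$ summands there exists a strictly increasing $\sigma_0 \colon \{1,\ldots,k\} \to \{1,\ldots,m\}$ with
\[
|f_{\sigma_0}| \;\ge\; \binom{m}{k}^{-1/2} \|v^*\|_2.
\]

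Next I would set $v_j = e_{\sigma_0(j)}$ for $j = 1, \ldots, k$. As already observed in the proof of Lemma~\ref{lem:norms-equiv} (taking $\lambda_{ij} = \delta_{i\sigma_0(j)}$), this choice yields $v^*(v_1, \ldots, v_k) = f_{\sigma_0}$. Choosing the sign of $\pm v_1$ so that the resulting value is nonnegative, one obtains
\[
v^*(\pm v_1, v_2, \ldots, v_k) \;=\; |f_{\sigma_0}| \;\ge\; \binom{m}{k}^{-1/2} \|v^*\|_2,
\]
which is the lower bound in the statement.

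Finally, the upper bound is immediate: since $\pm v_1, v_2, \ldots, v_k$ are unit vectors with respect to $g$, the definition of $\|\cdot\|_C$ gives $v^*(\pm v_1, v_2, \ldots, v_k) \le \|v^*\|_C$. There is no real obstacle here; the only subtlety is that the proof of Lemma~\ref{lem:norms-equiv} produces the bound $\|v^*\|_2 \le \sqrt{d}\,\|v^*\|_C$ at the \emph{level of norms}, whereas the present lemma asks that the witnessing vectors be drawn from the prescribed basis $B$ itself. The calculation above shows that the pigeonhole step automatically produces such vectors, so no additional work is needed.
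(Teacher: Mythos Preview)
Your argument is correct and is exactly what the paper intends: the lemma is stated there as an immediate consequence of the proof of Lemma~\ref{lem:norms-equiv}, and you have simply spelled out the pigeonhole-on-coefficients step in that proof with respect to the given basis $B$. There is no difference in approach.
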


\begin{rmk}
The inequalities in Lemma~\ref{lem:norms-equiv} are not necessarily sharp for all pairs of positive integers $m$ and $k$.
Below we find optimal constants in the two cases $k = 1$ or $2$ of interest in this paper. \qed
\end{rmk}

\begin{lem}
$\| \cdot \|_C = \| \cdot \|_2$ for any $k$-covector and any $k$-form if $k = 1$ or $m - 1$.
\end{lem}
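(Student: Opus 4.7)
The plan is to reduce the claim to the ``simplicity criterion'' already established in Lemma~\ref{lem:norms-equiv}: namely, that $\|v^*\|_C = \|v^*\|_2$ whenever the $k$-covector $v^*$ is simple. Since the statement about $k$-forms $\beta$ follows immediately by taking the supremum $\sup_{x \in M}$ of the pointwise equality on $k$-covectors, it suffices to treat a single covector at a point. I would therefore prove that every $k$-covector $v^* \in \Lambda^k(T_x M)$ is simple when $k = 1$ or $k = m-1$.

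The case $k = 1$ is trivial: a $1$-covector is simple by definition (the empty/single product). The heart of the argument is $k = m-1$. Here I would pass through the Hodge star, using that $\star \colon \Lambda^{m-1}(T_x M) \to \Lambda^1(T_x M)$ is an isomorphism. Given $v^* \in \Lambda^{m-1}(T_x M)$, the $1$-covector $\star v^*$ can, by choosing a suitable orthonormal basis $e_1, \ldots, e_m$ of $T_x M$ with dual basis $\alpha_1, \ldots, \alpha_m$, be written in the form $\star v^* = c\,\alpha_1$ for some $c \in \R$. Applying $\star$ again and using $\star\star = \pm \id$ on $\Lambda^{m-1}$, one gets $v^* = \pm c\, \star \alpha_1 = \pm c \, \alpha_2 \wedge \alpha_3 \wedge \cdots \wedge \alpha_m$, which is simple. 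Invoking the last assertion of Lemma~\ref{lem:norms-equiv} then yields $\|v^*\|_C = \|v^*\|_2$ in both cases, and the equality for $k$-forms follows by taking the supremum over $x \in M$.

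The ``main obstacle'' is very mild here: essentially the only point that requires care is the normal-form step for $(m-1)$-covectors via the Hodge star, namely the observation that any $1$-covector can be rotated (orthonormally) to a multiple of a single basis covector and that $\star$ sends a single basis covector $\alpha_1$ to the simple covector $\pm \alpha_2 \wedge \cdots \wedge \alpha_m$. Both are standard facts about the Hodge star on an inner-product space, and no further work is needed beyond combining them with Lemma~\ref{lem:norms-equiv}.
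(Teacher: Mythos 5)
Your proof is correct, and it takes a genuinely different route from the paper. You explicitly establish that every $(m-1)$-covector on an $m$-dimensional inner product space is simple — via the Hodge star isomorphism $\star \colon \Lambda^{m-1}(T_x M) \to \Lambda^1(T_x M)$, an orthonormal rotation putting the $1$-covector $\star v^*$ into the form $c\,\alpha_1$, and $\star\star = \pm\id$ — and then invoke the ``equality iff simple'' clause of Lemma~\ref{lem:norms-equiv}. The paper instead stays inside the coordinate framework set up in the proof of Lemma~\ref{lem:norms-equiv}: it identifies the vector of minors $(\det M_{\sigma_1}, \ldots, \det M_{\sigma_m})$ with the cross product $v_1 \times \cdots \times v_{m-1}$, so that $v^*(v_1, \ldots, v_{m-1})$ becomes a determinant $\det[f_{v^*}, v_1, \ldots, v_{m-1}] = \langle f_{v^*},\, v_1 \times \cdots \times v_{m-1}\rangle$, and then bounds this by $\|f_{v^*}\|_2 = \|v^*\|_2$ because the cross product of unit vectors has norm at most $1$. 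The two routes are of comparable length; yours is conceptually cleaner and makes the degree-$1$/degree-$(m-1)$ duality explicit, while the paper's argument is more self-contained relative to the machinery it has already built and yields a concrete geometric picture (volume of the spanned parallelepiped) without appealing to $\star\star = \pm\id$ or a normal form for $1$-covectors.
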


\begin{proof}
For $k = 1$ the proof is an immediate consequence of Lemma~\ref{lem:norms-equiv} since any $1$-covector is simple.
We also give a direct argument.
It again suffices to prove the lemma for covectors.

If $k = 1$, write $v^* = \gamma (v)$ for a (unique) vector $v \in T_x M$.
Then by definition, $\| v^* \|_C \ge \gamma (v) (v / \| v \|_2) = \| v \|_2 = \| v^* \|_2$.
Conversely, for $u \in T_x M$ a unit vector, $| \gamma (v) (u) | = | g (v, u) | \le \| v \|_2$ by the Cauchy-Schwarz inequality, so $\| v^* \|_C \le \| v \|_2$.

For $k = m - 1$ the proof is similar, once we observe that $( \det (M_{\sigma_1}), \ldots, \det (M_{\sigma_m}) )$ is the cross product of $v_1, \ldots, v_{m - 1}$, and $v^* (v_1, \ldots, v_{m - 1})$ is the determinant of the matrix with columns the vectors $f_{v^*}, v_1, \ldots, v_{m - 1}$.
(Geometrically, the latter is, up to sign, the volume of the parallelepiped spanned by these vectors.)
\end{proof}

A key ingredient in our argument in section~\ref{sec:lin-eps-non-squeez} is the following lemma.
We state and prove it in this section for its corollary.

\begin{lem} \label{lem:standard-form}
Let $\omega$ be a two-form on an inner product space $V$.
Then there exists an orthonormal basis $B$ for $V$, $S = \{ u_1, \ldots, u_n, v_1, \ldots, v_n \} \subset B$, $2 n \le \dim V$, and positive numbers $0 < \lambda_1 \le \ldots \le \lambda_n$, such that $\omega (u_j, v_k) = \lambda_j^2 \delta_{j k}$ and $\omega (u_j, u_k) = \omega (v_j, v_k) = 0$ for $1 \le j, k \le n$, and $\omega$ vanishes on $B \, \backslash \, S$.
In other words, $\omega$ can be written in the form $\omega = \sum_{j = 1}^n \lambda_j^2 \, \alpha_j \wedge \beta_j$ with one-forms $\alpha_j$ and $\beta_j$ dual to the elements of $S$.
Moreover, $\omega$ is non-degenerate if and only if $2 n = \dim V$.
\end{lem}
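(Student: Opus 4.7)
The plan is to convert $\omega$ into a skew-adjoint endomorphism of $V$ via the inner product, and read off the desired orthonormal basis from its spectral decomposition. Concretely, define a linear map $A \colon V \to V$ by requiring $g(Av, w) = \omega(v, w)$ for all $v, w \in V$; this is well-defined since $g$ is non-degenerate. Skew-symmetry of $\omega$ translates into $g(Av, w) = -g(v, Aw)$, i.e., $A$ is $g$-skew-adjoint, and $\ker A = \{ v \in V \mid \omega(v, \cdot) = 0 \}$ is precisely the kernel of $\omega$.

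Next I would invoke the real spectral theorem for skew-adjoint operators: there exists a $g$-orthogonal decomposition $V = \ker A \oplus V_1 \oplus \cdots \oplus V_n$ into $A$-invariant subspaces with $\dim V_j = 2$, together with $g$-orthonormal bases $\{u_j, v_j\}$ of $V_j$ and numbers $\mu_j > 0$ such that $A u_j = \mu_j v_j$ and $A v_j = -\mu_j u_j$. One standard derivation complexifies: $iA$ is Hermitian with nonzero eigenvalues occurring in pairs $\pm \mu_j$, whose eigenspaces recombine over $\R$ into the $V_j$. Alternatively, $-A^2 = A^*A$ is self-adjoint and positive semi-definite, and each of its positive eigenspaces is $A$-invariant and splits into two-dimensional blocks on which $A$ acts as above. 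After reordering so that $\mu_1 \le \cdots \le \mu_n$, set $\lambda_j = \sqrt{\mu_j}$ and extend $\{u_j, v_j\}_{j=1}^n$ by any $g$-orthonormal basis of $\ker A$ to obtain $B$, with $S$ the chosen basis of $(\ker A)^\perp$.

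The verification of the claimed identities is then immediate. For $1 \le j, k \le n$,
\[ \omega(u_j, v_k) = g(A u_j, v_k) = \mu_j \, g(v_j, v_k) = \lambda_j^2 \, \delta_{j k}, \]
and $\omega(u_j, u_k) = g(\mu_j v_j, u_k) = 0$, $\omega(v_j, v_k) = g(-\mu_j u_j, v_k) = 0$ by $g$-orthogonality of $B$. For $b \in B \setminus S \subset \ker A$ and any $w \in V$, we have $\omega(b, w) = g(Ab, w) = 0$, so $\omega$ vanishes on $B \setminus S$. The coordinate expression $\omega = \sum_{j=1}^n \lambda_j^2 \, \alpha_j \wedge \beta_j$, with $\alpha_j, \beta_j$ dual to $u_j, v_j$, is obtained by evaluating both sides on ordered pairs from $B$. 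Finally, $\omega$ is non-degenerate if and only if $\ker A = 0$, which happens exactly when $2n = \dim V$.

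The only substantive ingredient is the real spectral theorem for skew-adjoint operators, so the main obstacle is not conceptual but rather careful bookkeeping of signs and of the ordering of the $\mu_j$; everything else reduces to a direct calculation.
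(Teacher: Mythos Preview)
Your proof is correct and follows essentially the same route as the paper: both introduce the skew-adjoint operator $A$ with $g(Av,w)=\omega(v,w)$, use the spectral theory of $A^2$ (equivalently of $iA$ after complexification) to obtain an orthogonal splitting into $A$-invariant planes plus $\ker A$, and then read off the $\lambda_j$. The paper's argument is terser, citing Federer and leaving more of the verification implicit, while you spell out the identities; the paper also notes (in a remark) the $iA$-Hermitian variant you mention.
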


\begin{cor}
$\| \cdot \|_C \le \| \cdot \|_2 \le (\left\lfloor \frac{m}{2} \right\rfloor)^{1 / 2} \| \cdot \|_C$ for any two-covector and two-form on an $m$-dimensional Riemannian manifold $M$, and these inequalities are sharp.
\end{cor}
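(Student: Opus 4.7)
The first inequality $\|\cdot\|_C \le \|\cdot\|_2$ is already contained in Lemma~\ref{lem:norms-equiv}, so only the improved upper bound and its sharpness need argument. The plan is to pointwise apply the normal-form result Lemma~\ref{lem:standard-form} and read off both norms from the resulting coefficients $\lambda_j$.

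Fix a point and a two-covector $\omega$ at that point. By Lemma~\ref{lem:standard-form}, there is an orthonormal basis containing $u_1,v_1,\ldots,u_n,v_n$ (with $2n\le m$, hence $n\le\lfloor m/2\rfloor$) and positive scalars $\lambda_1\le\cdots\le\lambda_n$ with
\[
\omega = \sum_{j=1}^n \lambda_j^2\,\alpha_j\wedge\beta_j,
\]
where $\alpha_j,\beta_j$ are dual to $u_j,v_j$. Since the two-covectors $\alpha_j\wedge\beta_j$ form an orthonormal subset in the induced inner product on $\Lambda^2$, one has $\|\omega\|_2^2=\sum_{j=1}^n \lambda_j^4$. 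On the comass side, evaluating on the unit pair $(u_j,v_j)$ gives $\omega(u_j,v_j)=\lambda_j^2$, so $\|\omega\|_C\ge \lambda_n^2=\max_j \lambda_j^2$. Combining,
\[
\|\omega\|_2^2 \;=\; \sum_{j=1}^n \lambda_j^4 \;\le\; n\,\lambda_n^4 \;\le\; \bigl\lfloor\tfrac{m}{2}\bigr\rfloor\,\|\omega\|_C^2,
\]
and taking the supremum over points yields the corresponding bound for two-forms.

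For sharpness, set $n=\lfloor m/2\rfloor$, take any orthonormal basis $u_1,v_1,\ldots,u_n,v_n$ (completed arbitrarily to a basis of $T_xM$), and consider $\omega_0=\sum_{j=1}^n \alpha_j\wedge\beta_j$. The computation above gives $\|\omega_0\|_2=\sqrt{n}$. It remains to check that $\|\omega_0\|_C=1$: on the $2n$-dimensional span of the $u_j,v_j$, the form $\omega_0$ is the standard symplectic form compatible with the almost complex structure $J$ sending $u_j\mapsto v_j$, $v_j\mapsto -u_j$; for any two unit vectors $w_1,w_2$ in this span, $|\omega_0(w_1,w_2)|=|g(Jw_1,w_2)|\le \|Jw_1\|_2\|w_2\|_2=1$ by Cauchy--Schwarz, and on the complement $\omega_0$ vanishes so contributes nothing. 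Equality holds at $(u_1,v_1)$. Therefore $\|\omega_0\|_2/\|\omega_0\|_C=\sqrt{\lfloor m/2\rfloor}$, proving sharpness.

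The only mildly delicate step is verifying $\|\omega_0\|_C=1$ for the standard symplectic form, but this is a one-line Cauchy--Schwarz once one writes $\omega_0=g(J\cdot,\cdot)$; everything else is bookkeeping on the normal form from Lemma~\ref{lem:standard-form}.
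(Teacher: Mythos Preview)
Your proposal is correct and follows essentially the same approach as the paper: the paper's proof invokes the normal form of Lemma~\ref{lem:standard-form} to obtain at most $\lfloor m/2\rfloor$ nonzero coefficients and then appeals to the estimate $\|v^*\|_2\le\sqrt{N}\,\|v^*\|_C$ from the proof of Lemma~\ref{lem:norms-equiv}, while for sharpness it uses exactly your Cauchy--Schwarz argument via $\omega=g(J\cdot,\cdot)$. Your write-up is simply a more explicit unpacking of these same steps.
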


\begin{proof}[Proof of Corollary]
See the last three sentences of the proof of Lemma~\ref{lem:norms-equiv}.
To verify that the  second inequality is also sharp, suppose that $\omega$ is non-degenerate, and that $J$ is an almost complex structure that is compatible with $g$ so that $\omega = g (J \cdot, \cdot)$.
Then $\| \omega \|_C = \| g (J \cdot, \cdot) \|_C = 1$.
On the other hand, $\| \omega_0 \|_2 = \sqrt{n}$ for the standard symplectic structure $\omega_0$ and standard Riemannian metric on $\R^{2 n}$.
\end{proof}

\begin{proof} [Proof of Lemma~\ref{lem:standard-form}]
The argument here is taken from \cite[Section~1.7.3]{federer:gmt69}.
Let $A$ be the skew-symmetric matrix so that $\omega (v, w) = g (A v, w)$.
Decompose $V$ into a direct sum of mutually orthogonal and $A$-invariant subspaces $W_1, \ldots, W_s$ with $\dim W_j \le 2$ (which exists since $V$ has a basis of eigenvectors of the  symmetric matrix $A^2$), and observe that $\omega (v, w) = 0$ whenever $v \in W_j$ and $w \in W_k$ with $j \not= k$.
Choose an orthonormal basis $u_j$, $v_j$ for each $W_j$ that is two-dimensional, and extend to any orthonormal basis for $V$ if rank$(A) = 2 n < \dim V$.
Note that $g (A u_i, u_i) = \omega (u_i, u_i) = 0$, so we may choose $v_i$ parallel to $A u_i$, and then $\omega (u_i, v_i) = \| A u_i \|$.
Reorder the $W_j$ if necessary.
\end{proof}

\begin{rmk}
Alternatively, one may argue as in \cite[Lemma~2.4.5]{mcduff:ist17} in case $\omega$ is non-degenerate.
Here one observes that the matrix $i A \colon \C^{2 n} \to \C^{2 n}$ is Hermitian, and obtains the same basis vectors $u_j$ and $v_j$ (up to rescaling) as real and imaginary parts of the eigenvectors corresponding to the eigenvalues $i \lambda_j^2$ of $A$.
Note that the signs in the proof of \cite[Lemma~2.4.5]{mcduff:ist17} are different because $A$ is defined there with the opposite sign choice compared to the proof given above.
The argument easily extends to degenerate $\omega$. \qed
\end{rmk}

For $v$ a vector, denote by $\iota_v$ the interior multiplication (or contraction) of a co-vector $v^*$ (of degree $k \ge 1$) by $v$, i.e.\ $\iota_v v^* = v^* (v, \cdot, \ldots, \cdot)$, and similarly, for a vector field $X$, write $\iota_X$ for interior multiplication of a differential form by $X$.

\begin{lem} \label{lem:int-mult-est}
$\| \iota_v v^* \|_C \le \| v \|_2 \| v^* \|_C$ and $\| \iota_v v^* \|_2 \le \sqrt{k} \, \| v \|_2 \| v^* \|_2$ for a $k$-covector $v^*$, and thus $\| \iota_X \beta \|_C \le \| X \|_2 \| \beta \|_C$ and $\| \iota_X \beta \|_2 \le \sqrt{k} \, \| X \|_2 \| \beta \|_2$ for a $k$-form $\beta$.
\end{lem}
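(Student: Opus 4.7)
The plan is to prove both inequalities on covectors at a point $x \in M$; the corresponding inequalities for forms then follow immediately by taking pointwise suprema. I would dispose of the trivial case $v = 0$ first and assume $v \neq 0$ throughout.

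For the comass bound, the cleanest approach is direct from the definition: set $u = v/\|v\|_2$, a unit vector, and observe that for any unit vectors $v_2, \ldots, v_k$ one has $v^*(v, v_2, \ldots, v_k) = \|v\|_2 \, v^*(u, v_2, \ldots, v_k)$, whose absolute value is bounded by $\|v\|_2 \|v^*\|_C$ by definition of $\| v^* \|_C$. Taking the supremum over unit tuples $(v_2, \ldots, v_k)$ yields $\|\iota_v v^*\|_C \le \|v\|_2 \|v^*\|_C$.

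For the $\|\cdot\|_2$ bound, I would exploit the duality identity $\|\beta\|_2 = \sup\{\beta(w) : w \in \Lambda_*(T_xM),\ \|w\|_2 = 1\}$ established (for covectors of any degree) in the proof of Lemma~\ref{lem:norms-equiv}. Applied to the $(k-1)$-covector $\iota_v v^*$ and using $(\iota_v v^*)(w) = v^*(v \wedge w)$, Cauchy--Schwarz gives $v^*(v \wedge w) \le \|v^*\|_2 \|v \wedge w\|_2$, so the problem reduces to bounding $\|v \wedge w\|_2$ by $\sqrt{k}\, \|v\|_2 \|w\|_2$ for $w \in \Lambda_{k-1}(T_xM)$. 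For this I would choose an orthonormal basis $e_1, \ldots, e_m$ of $T_xM$ with $e_1 = v/\|v\|_2$ and split $w = w' + e_1 \wedge w''$ into pieces lying in the exterior algebra on $\mathrm{span}\{e_2, \ldots, e_m\}$. Then $v \wedge w = \|v\|_2\, e_1 \wedge w'$, and mutual orthogonality of the basis monomials in $e_1 \wedge w'$ yields $\|v \wedge w\|_2 = \|v\|_2 \|w'\|_2 \le \|v\|_2 \|w\|_2$. This already beats the claimed constant and, in particular, establishes $\|\iota_v v^*\|_2 \le \sqrt{k}\, \|v\|_2 \|v^*\|_2$. (If one prefers the $\sqrt{k}$ factor to come out naturally, an alternative is to expand $v^*$ in the orthonormal basis and apply Cauchy--Schwarz to the at most $k$ terms produced by contracting each monomial.)

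The only mild obstacle is recognizing which formulation of $\|\cdot\|_2$ to use: the definition via the Hodge star is not well suited to tracking how contraction interacts with degree, whereas the duality formulation—which pairs a $(k-1)$-covector against $(k-1)$-vectors—converts the problem into the purely linear-algebraic wedge estimate above.
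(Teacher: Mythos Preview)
Your comass argument is identical to the paper's: normalize $v$ and read off the bound from the definition of $\|\cdot\|_C$.

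For the $\|\cdot\|_2$ bound the paper proceeds differently: it appeals to the coordinate identity $\|v^*\|_2 = \|f_{v^*}\|_2$ from the proof of Lemma~\ref{lem:norms-equiv}, expands $\iota_v v^*$ in an orthonormal basis, and applies Cauchy--Schwarz to the coefficient sums. Because each $k$-multi-index $\sigma$ contributes to exactly $k$ distinct $(k-1)$-multi-indices after contraction, the double-counting produces the factor $\sqrt{k}$. Your route via the duality description $\|\iota_v v^*\|_2 = \sup_{\|w\|_2=1} v^*(v\wedge w)$ together with the wedge estimate $\|v\wedge w\|_2 \le \|v\|_2\|w\|_2$ (obtained by taking $e_1 = v/\|v\|_2$) is correct and in fact sharper: it gives constant $1$ rather than $\sqrt{k}$. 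This is consistent with the paper's own Remark~\ref{rmk:int-mult-est}, which notes that the $\sqrt{k}$ bound is not sharp for $1<k<\dim M$. So your argument is a genuine improvement on the paper's, at the small cost of invoking the dual description of $\|\cdot\|_2$ rather than working purely with coefficient vectors.
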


\begin{proof}
It is again enough to prove the lemma for covectors.
For the norm $\| \cdot \|_C$, the lemma follows immediately from the definition by writing $\iota_v v^* = \| v \|_2 \cdot \iota_{(v / \| v \|_2)} v^*$.
For the norm $\| \cdot \|_2$, the claim follows from the identity $\| v^* \|_2 = \| f_{v^*} \|_2$ established in the course of the proof of Lemma~\ref{lem:norms-equiv} and the Cauchy-Schwarz inequality.
\end{proof}

\begin{rmk} \label{rmk:int-mult-est}
The inequalities in the previous lemma are sharp for the comass norm, but not for the norm $\| \cdot \|_2$ when $1 < k < \dim M$. \qed
\end{rmk}

\section{Epsilon-area-preserving in dimension two} \label{sec:surfaces}
This short section gives a proof of the main theorem in the dimension two case.

\begin{pro}
Let $M_1$ and $M_2$ be surfaces equipped with area forms $\omega_1$ and $\omega_2$, respectively, $g$ be a Riemannian metric on $M_1$, and $\epsilon \ge 0$.
If $\varphi_k \colon M_1 \to M_2$ is a sequence of $\epsilon$-symplectic embeddings that converges uniformly (on compact subsets) to another embedding $\varphi \colon M_1 \to M_2$, then the limit $\varphi$ is again $\epsilon$-symplectic.
\end{pro}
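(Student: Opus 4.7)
The plan is to exploit the fact that on a two-dimensional manifold every two-form is a pointwise scalar multiple of the Riemannian area form. Let $dA_g$ denote the area form induced by $g$ on $M_1$, and write
\[
\varphi_k^* \omega_2 - \omega_1 = h_k \, dA_g
\]
for smooth functions $h_k \colon M_1 \to \R$. Because $dA_g$ is a unit top-form, the pointwise norm satisfies $|h_k(p)| = \| (\varphi_k^* \omega_2 - \omega_1)(p) \|_2 \le \epsilon$. Integrating over any relatively compact domain $U \subset M_1$ with smooth boundary, and applying the change-of-variables formula (the sign, determined by the orientation behaviour of $\varphi_k$, is constant in $k$ for $k$ large since $\varphi_k \to \varphi$), yields
\[
\left| \int_{\varphi_k(U)} \omega_2 - \int_U \omega_1 \right| \;=\; \left| \int_U h_k \, dA_g \right| \;\le\; \epsilon \cdot \mathrm{Area}_g(U).
\]

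Next I would pass to the limit $k \to \infty$. Uniform convergence $\varphi_k \to \varphi$ on $\overline{U}$ implies that for every $\delta > 0$, the symmetric difference $\varphi_k(U) \triangle \varphi(U)$ lies in the $\delta$-tubular neighbourhood of $\varphi(\partial U)$ once $k$ is large. Since $\varphi$ is a smooth embedding, $\varphi(\partial U)$ is a smooth compact $1$-submanifold of $M_2$, so the $\omega_2$-area of such tubular neighbourhoods tends to $0$ with $\delta$. Hence $\int_{\varphi_k(U)} \omega_2 \to \int_{\varphi(U)} \omega_2$, and the inequality above survives the limit. Writing $\varphi^* \omega_2 - \omega_1 = h \, dA_g$, which is a continuous two-form because $\varphi$ is smooth, and using change of variables in the reverse direction, this becomes
\[
\left| \int_U h \, dA_g \right| \;\le\; \epsilon \cdot \mathrm{Area}_g(U)
\]
for every such $U$.

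Finally, shrinking $U$ to a point $p$ and invoking continuity of $h$ (or Lebesgue differentiation applied to small geodesic discs around $p$) gives the pointwise bound $|h(p)| \le \epsilon$, i.e.\ $\| \varphi^* \omega_2 - \omega_1 \|_2 \le \epsilon$, so that $\varphi$ is $\epsilon$-symplectic. The only non-formal step is the convergence of the image integrals: the integrand $\omega_2$ is fixed but the domain $\varphi_k(U)$ converges merely in the Hausdorff sense, so the argument hinges on controlling the $\omega_2$-area of a thin shell around $\varphi(\partial U)$, which is exactly where smoothness of the limit $\varphi$ and the codimension-one nature of the boundary are used. When $M_1$ is non-compact, one applies the above pointwise to each $p \in M_1$ using a small $U \ni p$, so no separate argument is required.
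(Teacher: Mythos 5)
Your proof is correct, and it rests on the same key observation as the paper's: in dimension two the condition $\|\varphi_k^*\omega_2 - \omega_1\|_2 \le \epsilon$ is a pointwise scalar bound on a density function, and the $C^0$ convergence of the embeddings controls the pushforward measures enough for that bound to survive passage to the limit. The paper's proof is more compact: it writes $\varphi_k^*\omega_2 = f_k\omega_1$, asserts (with a brief gesture at the same measure-theoretic comparison you spell out) that $f_k \to f$ uniformly on compact sets, and then simply computes $\|\varphi^*\omega_2 - \omega_1\|_2 = |f-1|\,\|\omega_1\|_2 = \lim_k |f_k-1|\,\|\omega_1\|_2 \le \epsilon$ pointwise. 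You instead integrate the pointwise bound over a small domain $U$, pass to the limit in $\int_{\varphi_k(U)}\omega_2$ by bounding the $\omega_2$-area of a thin shell about $\varphi(\partial U)$, and then recover the pointwise estimate on $h$ by shrinking $U$; this supplies the justification the paper leaves implicit, at the cost of a detour through integral bounds and Lebesgue differentiation, and an extra (correct) remark about the sign being eventually constant. Both choices of normalization—by $\omega_1$ in the paper, by the Riemannian area form $dA_g$ in your proof—are equivalent since the two forms are pointwise proportional on a surface.
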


Therefore in dimension two, Theorem~\ref{thm:eps-symp-rig} holds for any $\epsilon \ge 0$ and with $E = \epsilon$.

\begin{proof}
Let $f_k$ and $f \colon M_1 \to \R$ be the (nowhere vanishing) functions defined by $\varphi_k^* \omega_2 = f_k \omega_1$ and $\varphi^* \omega_2 = f \omega_1$.
By considering the measures obtained by integrating these area forms, we see that the hypothesis $\varphi_k \to \varphi$ uniformly on compact subsets implies that $f_k \to f$ uniformly on compact subsets.
Then
\[ \| \varphi^* \omega_2 - \omega_1 \|_2 = | f - 1 | \| \omega_1 \|_2 = \lim_{k \to \infty} | f_k - 1 | \| \omega_1 \|_2 = \lim_{k \to \infty} \| \varphi_k^* \omega_2 - \omega_1 \|_2 \le \epsilon. \]
See the first paragraph of the proof of Theorem~\ref{thm:eps-symp-rig} below for an interpretation of the final inequality in the case of non-compact manifolds.
\end{proof}

\section{Epsilon-symplectic embeddings} \label{sec:eps-symp}
Let $(M_1, \omega_1)$ and $(M_2, \omega_2)$ be symplectic manifolds of the same dimension, $g$ be a Riemannian metric on $M_1$, and $\epsilon \ge 0$.
Recall that an embedding $\varphi \colon M_1 \to M_2$ is called $\epsilon$-symplectic if $\| \varphi^* \omega_2 - \omega_1 \|_2 \le \epsilon$.

\begin{rmk} \label{rmk:inverse-of-eps-symp}
Let $g_2$ be a Riemannian metric on $M_2$, and $\varphi \colon M_1 \to M_2$ be an $\epsilon_1$-symplectic diffeomorphism.
Let $\beta = \varphi^* \omega_2 - \omega_1$.
Then $(\varphi^{- 1})^* \omega_1 - \omega_2 = - (\varphi^{- 1})^* \beta$.
Thus if $\epsilon_1 > 0$ and $n > 1$, the inverse diffeomorphism $\varphi^{- 1}$ is not necessarily $\epsilon_2$-symplectic for some $\epsilon_2 \ge 0$ that depends only on $\epsilon_1$ and the metric $g_2$. 
If $\Phi$ is a linear map, a similar remark holds for its transpose $\Phi^T$.
See the following example. \qed
\end{rmk}

\begin{exa} \label{exa:inv-trans-eps-symp}
Let $n \ge 2$, and $\omega_0 = \sum_{j = 1}^n dx_j \wedge dy_j$ be the standard symplectic structure on $\R^{2 n} = \R^4 \times \R^{2 n - 4}$.
Let $K \not= 0$.
Consider the isomorphism $\Psi$ of $\R^4$ defined by $\Psi (x_1, y_1, x_2, y_2) = (x_1, y_1 + \epsilon x_2, - K^{- 1} x_2, - K y_2)$, and let $\Phi = \Psi \times \id$.
Then $\Phi^* \omega_0 - \omega_0 = \epsilon \, dx_1 \wedge dx_2$.
On the other hand, $(\Phi^{- 1})^* \omega_0 - \omega_0 = (K \epsilon) \, dx_1 \wedge dx_2$ and $(\Phi^T)^* \omega_0 - \omega_0 = (- K \epsilon) \, dy_1 \wedge dy_2$. \qed
\end{exa}

The preceding remark and example mean that our proof of Theorem~\ref{thm:eps-non-squeez-expand} cannot follow too closely the standard proof in the symplectic case given in \cite[Section~2.4]{mcduff:ist17}.
We include the following result solely for the sake of completeness.

\begin{lem} \label{lem:ball-stretch}
Let $\varphi \colon M_1 \to M_2$ be an embedding, and suppose that $\psi_1$ and $\psi_2$ are symplectic diffeomorphisms of $M_1$ and $M_2$, respectively.
Then there exists a constant $C (\psi_1)$ so that $\| (\psi_2 \circ \varphi \circ \psi_1)^* \omega_2 - \omega_1 \| \le C (\psi_1) \| \varphi^* \omega_2 - \omega_1 \|$ for both the norm $\| \cdot \|_2$ and the comass norm $\| \cdot \|_C$.
In fact, we may choose $C (\psi_1) =\| (\psi_1)_* \|^2$, where $\| \psi_* \| = \sup_{x \in M} \| d\psi (x) \|$, and $\| \Psi \| = \max \{ \| \Psi v \|_2 \mid \| v \|_2 = 1 \}$.
\end{lem}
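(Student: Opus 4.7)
The plan is to exploit that $\psi_1^*$ and $\psi_2^*$ preserve the respective symplectic forms, so that the quantity on the left reduces to the pullback by $\psi_1$ of the quantity on the right. Since $\psi_2$ is symplectic, $\psi_2^*\omega_2 = \omega_2$, and since $\psi_1$ is symplectic, $\psi_1^*\omega_1 = \omega_1$. Composing pullbacks in the natural order yields
\[
 (\psi_2 \circ \varphi \circ \psi_1)^* \omega_2 - \omega_1
 \;=\; \psi_1^* \varphi^* \psi_2^* \omega_2 - \psi_1^* \omega_1
 \;=\; \psi_1^*\bigl(\varphi^* \omega_2 - \omega_1\bigr).
\]
Thus the entire problem reduces to bounding $\|\psi_1^*\beta\|$ in terms of $\|\beta\|$ (both with respect to $g$) for a two-form $\beta$ on $M_1$.

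Next I would establish the pointwise pullback inequalities $\|\psi_1^*\beta(x)\|_\bullet \le \|d\psi_1(x)\|^2\,\|\beta(\psi_1(x))\|_\bullet$ for $\bullet \in \{2, C\}$, where $\|d\psi_1(x)\|$ denotes the operator norm of the differential with respect to $g$. For the comass norm this is immediate from the definition: given unit vectors $v_1, v_2 \in T_x M_1$,
\[
 |(\psi_1^*\beta)(v_1,v_2)| = |\beta(d\psi_1\,v_1,\, d\psi_1\,v_2)| \le \|d\psi_1(x)\|^2\,\|\beta(\psi_1(x))\|_C,
\]
by factoring out the norms of $d\psi_1\,v_j$ and using the definition of $\|\cdot\|_C$. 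For the norm $\|\cdot\|_2$ the cleanest route is via singular values: the operator norm of $\Lambda^2(d\psi_1(x))$ acting on two-covectors (equivalently, on two-vectors via the duality $\gamma$ established in Lemma~\ref{lem:norms-equiv}) equals the product of the two largest singular values of $d\psi_1(x)$, which is bounded above by $\|d\psi_1(x)\|^2$. Since the pullback $\psi_1^*$ on two-covectors is (up to the identifications) this exterior power, the same inequality holds for $\|\cdot\|_2$.

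Taking the supremum over $x \in M_1$ of the pointwise estimate and using that the norm $\|\beta\|$ is a supremum over all points $\psi_1(x)$ in the image (which lies in $M_1$, so is contained in the supremum over all of $M_1$),
\[
 \|\psi_1^*\beta\|_\bullet \le \bigl(\sup_{x \in M_1}\|d\psi_1(x)\|^2\bigr)\,\|\beta\|_\bullet = \|(\psi_1)_*\|^2\,\|\beta\|_\bullet.
\]
Combining with the first-paragraph reduction and setting $\beta = \varphi^*\omega_2 - \omega_1$, $C(\psi_1) = \|(\psi_1)_*\|^2$ completes the proof for both norms simultaneously.

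There is no real obstacle here; the only mildly subtle point is the bound on $\|\Lambda^2 A\|_2$ for the $\|\cdot\|_2$ norm, but the singular value argument handles it cleanly. Note also that $\psi_2$ enters only through the identity $\psi_2^*\omega_2 = \omega_2$ and drops out of the final constant entirely, consistent with the statement.
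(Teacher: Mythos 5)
Your proof is correct and follows the same route as the paper: reduce to $\psi_1^*(\varphi^*\omega_2-\omega_1)$ via the symplectic identities, then bound the pullback on two-covectors by $\|d\psi_1\|^2$. The only difference is that where the paper cites Federer's \emph{Geometric Measure Theory} \S 1.7.6 for the $\|\cdot\|_2$ estimate, you supply the short singular-value argument for $\|\Lambda^2 A\|$, which is a reasonable way to make the cited estimate self-contained.
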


\begin{proof}
The lemma follows from the identity $(\psi_2 \circ \varphi \circ \psi_1)^* \omega_2 - \omega_1 = \psi_1^* (\varphi^* \omega_2 - \omega_1)$.
See \cite[Section~1.7.6]{federer:gmt69} for the estimate $\| \psi_1^* (\varphi^* \omega_2 - \omega_1) \|_2 \le C (\psi_1) \| \varphi^* \omega_2 - \omega_1 \|_2$.
(It is sufficient to prove this for two-covectors or a dual inequality for two-vectors.)
The analogous inequality is obvious for the comass norm.
\end{proof}

We will use the following obvious remark in our argument in section~\ref{sec:lin-eps-non-squeez}.

\begin{rmk} \label{rmk:eps-symp-comp-symp}
If $\psi$ is an (anti-)symplectic diffeomorphism of $(M_2, \omega_2)$, then an embedding $\varphi \colon M_1 \to M_2$ is $\epsilon$-(anti-)symplectic if and only if the composition $\psi \circ \varphi$ is $\epsilon$-symplectic. \qed
\end{rmk}

\section{Epsilon-symplectic embeddings into Euclidean space} \label{sec:eps-symp-r2n}
In this section, we consider the case $M = \R^m$ with its standard Riemannian metric $g_0 = \langle \cdot, \cdot \rangle$.
If $m = 2 n$, we also equip $\R^{2 n}$ with its standard symplectic structure $\omega_0 = \sum_{j = 1}^n dx_j \wedge dy_j$.
Recall that $\omega_0 = g_0 (J_0 \cdot, \cdot)$, where $J_0$ is the standard (almost) complex structure on $\R^{2 n}$.

Let $U \subset \R^m$ be an open subset that is star-shaped with respect to the origin.
The following lemma is an immediate consequence of (the proof of) the Poincar\'e Lemma \cite[4.18]{warner:fdm83}.
Let $\Omega^k = \Omega^k (U) = \Omega^k (U, \R)$ be the space of (differential) $k$-forms on $U$, and write as usual $d \colon \Omega^{k - 1} \to \Omega^k$ for the differential.
The cases of greatest interest to us are the open ball $B_r^m \subset \R^m$ of radius $r > 0$ (centered at the origin) and ellipsoids (centered at the origin), but we also have in mind the open polydisk $P (r_1, \ldots, r_n) = B_{r_1}^2 \times \cdots \times B_{r_n}^2 \subset \R^{2 n}$.

\begin{lem}[Quantitative Poincar\'e Lemma] \label{lem:quant-PL}
For each $k \ge 1$, there is a bounded and $\R$-linear (and hence continuous) transformation $h_k \colon \Omega^k \to \Omega^{k - 1}$ such that $h_{k + 1} \circ d + d \circ h_k = \id$, and in particular, the restriction of $h_k$ to the space of closed $k$-forms is a right inverse to the differential $d$.
In fact,
\[ \| h_k (\beta) (x) \|_2 \le \frac{\| x \|_2}{k - 1} \, \sqrt{k {m \choose k - 1}} \max_{0 \le t \le 1} \| \beta (t x) \|_2 \le \frac{s}{k - 1} \, \sqrt{k {m \choose k - 1}} \| \beta \|_2 \]
for $k > 1$, and $\| h_k (\beta) (x) \|_2 \le \| x \|_2 \sqrt{m} \max_{0 \le t \le 1} \| \beta (t x) \|_2 \le s \sqrt{m} \| \beta \|_2$ if $k = 1$, where $s = \sup_{x \in U} \| x \|_2$.
\end{lem}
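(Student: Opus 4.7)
The plan is to write down an explicit homotopy operator for the Poincar\'e Lemma on $U$ and then read off the norm estimates from the pointwise bounds established in Section~\ref{sec:norms}.

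First, we would define, for $\beta \in \Omega^k$ and $x \in U$,
\[ (h_k \beta)(x)(v_1, \ldots, v_{k-1}) = \int_0^1 t^{k-1} \, \beta(tx)(x, v_1, \ldots, v_{k-1}) \, dt, \]
or equivalently $h_k \beta(x) = \int_0^1 t^{k-1} (\iota_x \beta)(tx) \, dt$, where $\iota_x$ denotes interior multiplication with the position vector at $x$ regarded as a tangent vector; for $k = 1$ this reduces to the scalar $h_1 \beta(x) = \int_0^1 \beta(tx)(x) \, dt$. Star-shapedness of $U$ ensures $tx \in U$ for $t \in [0, 1]$, so the integrand is smooth in $x$.

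Second, we would verify the homotopy identity $h_{k+1} \circ d + d \circ h_k = \id$ on $\Omega^k$. This is the standard computation with $\phi_t(x) = tx$ and radial vector field $V_x = x$: Cartan's magic formula $\mathcal{L}_V = \iota_V d + d \iota_V$ together with $\frac{d}{dt} \phi_t^* \beta = \frac{1}{t} \phi_t^* (\mathcal{L}_V \beta)$, integrated from $0$ to $1$ using $\phi_1 = \id$, yields $\beta$ on the left and splits the right hand side as $d(h_k \beta) + h_{k + 1}(d \beta)$. $\R$-linearity of $h_k$ is then manifest from the integral formula.

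Third, and this is the only quantitative part, we would apply the inequalities in Section~\ref{sec:norms} pointwise in $t$. For fixed $t$ the integrand $(\iota_x \beta)(tx)$ is a $(k-1)$-covector at $tx$, so applying $\| \cdot \|_2 \le \sqrt{\binom{m}{k-1}} \, \| \cdot \|_C$ from Lemma~\ref{lem:norms-equiv} together with $\| \iota_x \beta(tx) \|_C \le \| x \|_2 \| \beta(tx) \|_C \le \| x \|_2 \| \beta(tx) \|_2$ from Lemma~\ref{lem:int-mult-est}, passing the norm inside the integral, and evaluating $\int_0^1 t^{k-1} \, dt = 1/k$, yields a bound of the stated form. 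The second inequality in the lemma then follows from $\| \beta(tx) \|_2 \le \| \beta \|_2$ and $\| x \|_2 \le s$. The case $k = 1$ is handled directly by Cauchy--Schwarz on the scalar integrand $\beta(tx)(x)$; the additional $\sqrt{m}$ factor stated there is harmless slack.

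The main thing to watch is bookkeeping of the combinatorial constant: recovering precisely the factor $\sqrt{k \binom{m}{k-1}} / (k - 1)$ stated in the lemma, rather than a smaller but differently-shaped constant, depends on exactly which of the two inequalities in Lemma~\ref{lem:int-mult-est} one uses and at what moment the mass/comass conversion of Lemma~\ref{lem:norms-equiv} is inserted. Otherwise we do not anticipate any real obstacle.
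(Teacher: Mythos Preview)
Your proposal is correct and defines exactly the same homotopy operator as the paper (which writes it as $h_k = \alpha_{k-1}\circ\iota_X = \iota_X\circ\alpha_k$ with $X$ the radial field and $\alpha_k$ the integral operator on coefficients). The only genuine difference is in the estimation step, and you have already put your finger on it: you pass the norm under the integral and bound $\|\iota_x(\beta(tx))\|_2$ via the comass route of Lemmas~\ref{lem:norms-equiv} and~\ref{lem:int-mult-est}, which produces the constant $\tfrac{1}{k}\sqrt{\binom{m}{k-1}}$. The paper instead estimates the two factors of $h_k=\alpha_{k-1}\circ\iota_X$ separately: $\iota_X$ via the $\|\cdot\|_2$-inequality in Lemma~\ref{lem:int-mult-est} (contributing $\sqrt{k}\,\|x\|_2$) and $\alpha_{k-1}$ via a direct coefficient computation (contributing $\tfrac{1}{k-1}\sqrt{\binom{m}{k-1}}$), hence the stated $\tfrac{1}{k-1}\sqrt{k\binom{m}{k-1}}$. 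Your route is cleaner and gives the smaller constant; if you want to match the paper's constant verbatim, you would have to imitate that factorization rather than estimate the integral directly. Likewise for $k=1$: Cauchy--Schwarz gives $\|x\|_2\max_t\|\beta(tx)\|_2$ with no $\sqrt{m}$, so the stated bound is, as you note, slack arising from the paper's uniform treatment of $\alpha_k$.
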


\begin{proof}
Let $h_k = \alpha_{k - 1} \circ \iota_X = \iota_X \circ \alpha_k$, where $X$ denotes the radial vector field $\sum_{i = 1}^m x_i \cdot (\partial / \partial x_i)$, and where for each $k \ge 0$, $\alpha_k$ is defined by
\[ \alpha_k \left( f_\sigma (x) \, dx_{\sigma (1)} \wedge \ldots \wedge dx_{\sigma (k)} \right) = \left( \int_0^1 t^{k - 1} f_\sigma (t x) \, dt \right) dx_{\sigma (1)} \wedge \ldots \wedge dx_{\sigma (k)}, \]
and then extended linearly to all of $\Omega^k$.
More explicitly, for each $k \ge 1$,
\begin{align*}
& h_k \left( f_\sigma (x) \, dx_{\sigma (1)} \wedge \ldots \wedge dx_{\sigma (k)} \right) = \\
& \int_0^1 t^{k - 1} f_\sigma (t x) \, dt \cdot \sum_{j = 1}^k (- 1)^{j + 1} x_{\sigma (j)} dx_{\sigma (1)} \wedge \ldots \wedge dx_{\sigma (j - 1)} \wedge dx_{\sigma (j + 1)} \wedge \ldots \wedge dx_{\sigma (k)}.
\end{align*}
This definition is given in \cite[4.18]{warner:fdm83} only for the unit ball $B_1^m$.
However, the definition is the same for $B_r^m$ and arbitrary star-shaped $U$.
In fact, if $D_r \colon \R^m \to \R^m$ denotes the dilation $x \mapsto r \cdot x$, then we can define $h_k^r = (D_r^{- 1})^* \circ h_k \circ D_r^*$ on $\Omega_k (B_r^m)$, and the definitions of $\alpha_k$, $\iota_X$, and $h_k$ are invariant under conjugation by the induced isomorphism $D_r^*$.
See \cite[4.18]{warner:fdm83} for a proof that $h_{k + 1} \circ d + d \circ h_k = \id$.

To verify the claimed estimates, first note that by Lemma~\ref{lem:int-mult-est},
\[ \| (\iota_X \beta) (x) \|_2 \le \sqrt{k} \, \| x \|_2 \, \| \beta (x) \|_2 \le \sqrt{k} \, s \, \| \beta \|_2. \]
Moreover, with the notation from section~\ref{sec:norms},
\begin{align}
\| \alpha_k (\beta) (x) \|_2 & = \sqrt{\sum_\sigma \left( \int_0^1 t^{k - 1} f_\sigma (t x) dt \right)^2} \nonumber \\
& \le \frac{1}{k} \sqrt{\sum_\sigma \max_{0 \le t \le 1} f_\sigma^2 (t x)} \label{eqn:first-estimate} \\
& \le \frac{1}{k} \sqrt{m \choose k} \max_\sigma \max_{0 \le t \le 1} | f_\sigma (t x) | \label{eqn:second-estimate} \\
& \le \frac{1}{k} \sqrt{m \choose k} \max_{0 \le t \le 1} \| \beta (t x) \|_2, \label{eqn:third-estimate}
\end{align}
which yields the inequality for $h_k = \alpha_{k - 1} \circ \iota_X$ if $k > 1$ and for $h_1 = \iota_X \circ \alpha_1$.
\end{proof}

\begin{cor} \label{cor:quant-PL}
If all of the coefficients $f_\sigma$ of a differential $k$-form $\beta$, $k \ge 1$, are constant along rays through the origin, then $\| h_k (\beta) (x) \|_2 \le \frac{\| x \|_2}{\sqrt{k}} \| \beta (x) \|_2 \le \frac{s}{\sqrt{k}} \| \beta \|_2$.
\end{cor}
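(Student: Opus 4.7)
The plan is to exploit the explicit form of $h_k$ from the proof of Lemma~\ref{lem:quant-PL}, where under the homogeneity hypothesis the inner integral $\int_0^1 t^{k-1} f_\sigma(tx)\,dt$ collapses. This is what produces a better constant than in the general estimate (\ref{eqn:first-estimate})--(\ref{eqn:third-estimate}), which was wasteful precisely because it dropped the $t^{k-1}$ factor and replaced $f_\sigma(tx)$ with its maximum.

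First I would write $h_k = \iota_X \circ \alpha_k$ and evaluate $\alpha_k(\beta)$ using the hypothesis $f_\sigma(tx) = f_\sigma(x)$ for all $t \in [0,1]$ and all $\sigma$. Then
\[
\alpha_k(f_\sigma(x)\,dx_{\sigma(1)} \wedge \cdots \wedge dx_{\sigma(k)}) = \left(\int_0^1 t^{k-1}\,dt\right) f_\sigma(x)\,dx_{\sigma(1)} \wedge \cdots \wedge dx_{\sigma(k)} = \tfrac{1}{k} f_\sigma(x)\,dx_{\sigma(1)} \wedge \cdots \wedge dx_{\sigma(k)},
\]
so extending linearly yields $\alpha_k(\beta)(x) = \tfrac{1}{k}\beta(x)$ pointwise, and therefore $h_k(\beta)(x) = \tfrac{1}{k}(\iota_X \beta)(x)$.

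Next I would apply the interior multiplication estimate of Lemma~\ref{lem:int-mult-est} to the $k$-covector $\beta(x)$ contracted against the tangent vector $X(x)$, which, since $X$ is the radial vector field, satisfies $\|X(x)\|_2 = \|x\|_2$:
\[
\|h_k(\beta)(x)\|_2 = \tfrac{1}{k}\|\iota_{X(x)}\beta(x)\|_2 \le \tfrac{1}{k}\sqrt{k}\,\|x\|_2\,\|\beta(x)\|_2 = \tfrac{\|x\|_2}{\sqrt{k}}\|\beta(x)\|_2.
\]
Bounding $\|x\|_2 \le s$ and $\|\beta(x)\|_2 \le \|\beta\|_2$ gives the second inequality. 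The case $k = 1$ requires no separate argument since the identity $h_1 = \iota_X \circ \alpha_1$ is precisely the formula used above.

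There is no real obstacle here; the only thing to be careful about is recognizing that the homogeneity of the coefficients makes the $t$-integral trivial so that one never needs to invoke the crude bound $\int_0^1 t^{k-1} f_\sigma(tx)\,dt \le \frac{1}{k}\max_{0\le t\le 1}|f_\sigma(tx)|$ that produced the factor $\sqrt{\binom{m}{k-1}}$ in Lemma~\ref{lem:quant-PL}. Replacing that factor by $1$ and using the sharp estimate $\|\iota_X \beta\|_2 \le \sqrt{k}\|X\|_2\|\beta\|_2$ yields the improved constant $1/\sqrt{k}$.
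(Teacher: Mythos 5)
Your proposal is correct and follows the same route as the paper: use the factorization $h_k = \iota_X \circ \alpha_k$, observe that the constancy of the coefficients along rays gives $\alpha_k(\beta)(x) = \tfrac{1}{k}\beta(x)$ exactly, and then apply the $\ell^2$-estimate $\|\iota_X\beta\|_2 \le \sqrt{k}\,\|X\|_2\,\|\beta\|_2$ from Lemma~\ref{lem:int-mult-est} with $\|X(x)\|_2 = \|x\|_2$. This is precisely the paper's (very terse) argument, just written out in full.
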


\begin{proof}
Apply the argument in the previous proof to $h_k = \iota_X \circ \alpha_k$ and observe that $\| \alpha_k (\beta) (x) \|_2 = \frac{1}{k} \| \beta (x) \|_2$.
\end{proof}

\begin{rmk}
Note that the following estimates in the course of the proof of Lemma~\ref{lem:quant-PL} are sharp: (\ref{eqn:first-estimate}) for example when all $f_\sigma$ are constant and equal, (\ref{eqn:second-estimate}) when all $f_\sigma$ are equal and nonzero, and (\ref{eqn:third-estimate}) when at most one $f_\sigma$ is nonzero.
The combination of the inequalities however may not be sharp (as in Corollary~\ref{cor:quant-PL}). \qed
\end{rmk}

\begin{rmk} \label{rmk:hodge-decomp}
On a general closed and oriented Riemannian manifold, one has the Hodge decomposition $d \circ \delta \circ G + \delta \circ G \circ d + H = \id$, where $\delta = \pm \star d \star$, $G$ is Green's operator, and $H$ is the projection to harmonic forms \cite[Chapter~6]{warner:fdm83}.
An explicit estimate as in Lemma~\ref{lem:quant-PL} for the norm of $(\delta \circ G) (\beta) = (G \circ \delta) (\beta)$ in terms of the norm of an exact form $\beta$ is much more challenging. \qed
\end{rmk}

\begin{rmk}
In general, a family of linear transformations $\sigma_k \colon \Omega^k \to \Omega^{k - 1}$ such that $d \circ \sigma_k = \id$ for all $k \ge 1$ is called a splitting (of the de Rham complex).
In that language, the maps $h_k$ in Lemma~\ref{lem:quant-PL} and $\delta \circ G$ in Remark~\ref{rmk:hodge-decomp} are splittings.
These splittings are smooth in the sense that the constructions depend smoothly on the differential form.
A splitting also exists in the case of non-compact manifolds (that are countable at infinity), see \cite[Section~1.5]{banyaga:scd97} for a summary. \qed
\end{rmk}

\begin{rmk}
Even more generally, given two maps $\varphi$ and $\psi$ between smooth manifolds, a family of linear transformations $h_k \colon \Omega^k \to \Omega^{k - 1}$ such that $\varphi^* - \psi^* = h_{k + 1} \circ d + d \circ h_k$ is called a homotopy operator between $\varphi$ and $\psi$.
In that language, the linear transformations in Lemma~\ref{lem:quant-PL} define a homotopy operator between the identity map and a constant map \cite[Remark~4.19]{warner:fdm83}.

A.~Banyaga \cite[Section~3.1]{banyaga:scd97} constructed a homotopy operator $I_{ \{ \varphi_t \} }$ between a (compactly supported) diffeomorphism that is isotopic to the identity and the identity:
let $\{ \varphi_t \}_{0 \le t \le 1}$ be an isotopy with $\varphi_0 = \id$ and $\varphi_1 = \varphi$, and $X = \{ X_t \}_{0 \le t \le 1}$ the unique vector field that generates this isotopy.
Then $I_{\{ \varphi_t \}} (\beta) = \int_0^1 \varphi_t^* (\iota_{X_t} \beta) dt$.
In particular, if $\beta$ is a closed $k$-form, then $\varphi^* \beta - \beta = d I_{\{ \varphi_t \}} (\beta)$.
However, the $(k - 1)$-form $I_{\{ \varphi_t \}} (\beta)$ is not necessarily small when $\beta$ is small, unless the isotopy $\{ \varphi_t \}$ is already known to be $C^1$-small (or $k = 1$ and the $C^0$-norm $\| X \|$ is small). \qed
\end{rmk}

\begin{lem} \label{lem:eps-symp-approx-symp}
Suppose that $\varphi \colon B_r^{2 n} \to \R^{2 n}$ is an $\epsilon$-symplectic embedding such that $0 \le \epsilon < 1 / \sqrt{2}$, and define $\rho > 0$ by
\[ \rho = (1 - \sqrt{2} \, \epsilon)^{\sqrt{2 n}} \le 1. \]
Then there exists an embedding $\psi \colon B_{\rho r}^{2 n} \to B_r^{2 n}$ such that $B_{\rho s}^{2 n} \subset \psi (B_s^{2 n}) \subset B_{\rho^{- 1} s}^{2 n}$ for all $s \le \rho r$, $\| \psi (x) - x \|_2 \le (\rho^{- 1} - 1) \| x \|_2$ for all $x \in B_{\rho r}^{2 n}$, and $(\varphi \circ \psi)^* \omega_0 = \omega_0$.
If $\varphi$ is (the restriction of) a linear map $\R^{2 n} \to \R^{2 n}$, then we may choose
\[ \rho = \sqrt{1 - \sqrt{2} \epsilon} \le 1. \]
\end{lem}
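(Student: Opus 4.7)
The plan is a Moser stability argument. Set $\beta = \varphi^*\omega_0 - \omega_0$, a closed $2$-form on $B_r^{2n}$ with $\|\beta(x)\|_2 \le \epsilon$ pointwise, and consider the interpolation $\omega_t = \omega_0 + t\beta$ for $t \in [0,1]$.  I will construct a time-dependent vector field $X_t$ on $B_r^{2n}$ whose flow $\psi_t$ starts at the identity and satisfies $\psi_t^*\omega_t = \omega_0$; then $\psi := \psi_1$ will automatically give $(\varphi \circ \psi)^*\omega_0 = \psi^*\omega_1 = \omega_0$.  Writing $\omega_t(v,w) = \langle A_t(x) v, w\rangle$ with $A_t = J_0 + tB$, a direct computation in the standard basis shows $\|B(x)\|_F = \sqrt{2}\,\|\beta(x)\|_2 \le \sqrt{2}\,\epsilon$, so the operator norm satisfies $\|B(x)\| \le \sqrt{2}\,\epsilon$.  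Since the singular values of $J_0$ are all equal to $1$, those of $A_t(x)$ lie in $[1 - t\sqrt{2}\epsilon,\, 1 + t\sqrt{2}\epsilon]$; the hypothesis $\epsilon < 1/\sqrt{2}$ keeps this strictly positive on $[0,1]$, so $\omega_t$ is symplectic with $\|A_t(x)^{-1}\| \le (1 - t\sqrt{2}\epsilon)^{-1}$.

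Next I take the primitive $\alpha = h_2(\beta)$ from Lemma~\ref{lem:quant-PL}, which gives $\|\alpha(x)\|_2 \le 2\sqrt{n}\,\|x\|_2\,\epsilon$ in general, and the sharper $\|\alpha(x)\|_2 \le \|x\|_2\,\epsilon/\sqrt{2}$ in the linear case via Corollary~\ref{cor:quant-PL} (a linear $\varphi$ forces $\beta$ to have constant coefficients).  The Moser equation $\iota_{X_t}\omega_t = -\alpha$ is solved by $X_t = -A_t^{-1}\alpha^{\sharp}$, with pointwise bound $\|X_t(y)\|_2 \le C\epsilon\|y\|_2/(1 - t\sqrt{2}\epsilon)$, where $C = 2\sqrt{n}$ in general and $C = 1/\sqrt{2}$ in the linear case.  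Because this bound is linear in $\|y\|_2$, along any integral curve of the flow
\[ \bigl|\tfrac{d}{dt}\ln\|\psi_t(x)\|_2\bigr| \le \tfrac{C\epsilon}{1 - t\sqrt{2}\epsilon}, \]
and integrating yields the two-sided bound $\|x\|_2(1 - t\sqrt{2}\epsilon)^{C/\sqrt{2}} \le \|\psi_t(x)\|_2 \le \|x\|_2(1 - t\sqrt{2}\epsilon)^{-C/\sqrt{2}}$.  Setting $\rho := (1 - \sqrt{2}\epsilon)^{C/\sqrt{2}}$ exactly reproduces the two claimed values, since $C/\sqrt{2} = \sqrt{2n}$ in general and $C/\sqrt{2} = 1/2$ in the linear case.

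The upper Gronwall bound guarantees that the flow started in $B_{\rho r}^{2n}$ stays inside $B_r^{2n}$ throughout $[0,1]$, so $\psi_t$ is well defined and the Moser identity $\tfrac{d}{dt}\psi_t^*\omega_t = \psi_t^*(d\iota_{X_t}\omega_t + \beta) = 0$ is valid.  The inclusions $B_{\rho s}^{2n} \subset \psi(B_s^{2n}) \subset B_{\rho^{-1}s}^{2n}$ for $s \le \rho r$ follow directly from the two-sided Gronwall bound applied to $\psi_1$ and to its inverse (obtained as the time-$1$ flow of the reversed equation).  For the displacement bound,
\[ \|\psi(x) - x\|_2 \le \int_0^1 \|X_t(\psi_t(x))\|_2\,dt \le C\epsilon\,\|x\|_2 \int_0^1 (1 - t\sqrt{2}\epsilon)^{-C/\sqrt{2} - 1}\,dt, \]
and the substitution $u = 1 - t\sqrt{2}\epsilon$ reduces the last integral to $(C\epsilon)^{-1}(\rho^{-1} - 1)$, producing exactly $(\rho^{-1} - 1)\|x\|_2$.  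The two delicate points I expect are (a) getting the constant $\sqrt{2}$ correct --- it enters through the identity $\|B\|_F = \sqrt{2}\|\beta\|_2$ for the skew matrix of a $2$-form and is what makes $\epsilon < 1/\sqrt{2}$ the right threshold, and (b) confirming the Moser flow cannot leave $B_r^{2n}$ before time $1$, for which the upper Gronwall estimate at the starting radius $\rho r$ is exactly what is needed.
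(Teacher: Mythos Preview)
Your proof is correct and is essentially the same Moser argument as the paper's: same interpolation $\omega_t$, same primitive $\alpha=h_2(\beta)$ from Lemma~\ref{lem:quant-PL} (with Corollary~\ref{cor:quant-PL} in the linear case), same Gronwall estimate, and the same displacement integral. The only cosmetic difference is that you express the invertibility estimate via the matrix $A_t=J_0+tB$ and the bound $\|B\|\le\|B\|_F=\sqrt{2}\,\|\beta\|_2$, whereas the paper obtains the identical inequality $\| \iota_{X_t}\omega_t\|_2\ge(1-\sqrt{2}\,\epsilon t)\|X_t\|_2$ by applying Lemma~\ref{lem:int-mult-est} to $\iota_{X_t}\omega_0$ and $\iota_{X_t}\beta$.
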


\begin{proof}[Proof]
We construct the embedding $\psi$ as the time-one map of an isotopy $\psi_t$ defined on the smaller ball $B_{\rho r}^{2 n}$ and with $\psi_0 = \id$ using Moser's argument, and so that $(\varphi \circ \psi)^* \omega_0 = \psi^* (\varphi^* \omega_0) = \omega_0$.

Let $\omega_t = \omega_0 + t (\varphi^* \omega_0 - \omega_0)$.
Since $\epsilon < 1$, each two-form $\omega_t$ is symplectic, and $\frac{d}{dt} \omega_t = \varphi^* \omega_0 - \omega_0$ is a closed (and hence exact) two-form.
Let $h_2$ be the linear transformation in Lemma~\ref{lem:quant-PL}, and consider the one-form $\sigma = h_2 (\varphi^* \omega_0 - \omega_0)$.
By Lemma~\ref{lem:quant-PL} and by hypothesis, $\| \sigma (x) \|_2 \le 2 \sqrt{n} \, \| x \|_2 \| \varphi^* \omega_0 - \omega_0 \|_2 \le 2 \sqrt{n} \, \epsilon \| x \|_2$.

Following Moser's idea, define a family of vector fields $X_t$ by $\iota_{X_t} \omega_t = - \sigma$.
Then
\begin{align*}
\| (\iota_{X_t} \omega_t) (x) \|_2 & \ge \| (\iota_{X_t} \omega_0) (x) \|_2 - t \, \| (\iota_{X_t} (\varphi^* \omega_0 - \omega_0)) (x) \|_2 \\
& \ge \| (\iota_{X_t} \omega_0) (x) \|_2 - \sqrt{2} \, t \, \| X_t (x) \|_2 \| \varphi^* \omega_0 - \omega_0 \|_2 \\
& = \| X_t (x) \|_2 - \sqrt{2} \, t \, \| X_t (x) \|_2 \| \varphi^* \omega_0 - \omega_0 \|_2 \\
& \ge (1 - \sqrt{2} \, \epsilon \, t) \| X_t (x) \|_2.
\end{align*}
We used Lemma~\ref{lem:int-mult-est} for the second inequality.
Thus $\| X_t (x) \|_2 \le C (t) \| x \|_2$, where
\[ C (t) = \frac{\sqrt{2 n} \cdot \sqrt{2} \, \epsilon}{1 - \sqrt{2} \, \epsilon \, t}. \]
The trajectories $x \colon [0, 1] \to \R^{2 n}$ of $X_t$ therefore satisfy (away from the fixed point at the origin) the differential inequality
\[ \left| \frac{d}{dt} \| x (t) \|_2 \right| = \frac{\left| 2 \langle x (t), x' (t) \rangle \right|}{2 \| x (t) \|_2} \le \| x' (t) \|_2 = \| X_t (x (t)) \|_2 \le C (t) \| x (t) \|_2, \]
and thus $\| x (0) \|_2 \, (1 - \sqrt{2} \, \epsilon \, t)^{\sqrt{2 n}} \le \| x (t) \|_2 \le \| x (0) \|_2 \, (1 - \sqrt{2} \, \epsilon \, t)^{- \sqrt{2 n}}$.
Finally,
\[ \| x (1) - x (0) \| \le \int_0^1 \| x' (t) \| dt \le \int_0^1 \frac{C (t) \| x (0) \|}{(1 - \sqrt{2} \epsilon t)^{\sqrt{2 n}}} dt = \| x (0) \| ((1 - \sqrt{2} \epsilon)^{- \sqrt{2 n}} - 1), \]
where we dropped the subscript from the norm $\| \cdot \|_2$ for better readability.
Thus the map $\psi$ has all of the stated properties.
For the statement about linear maps, substitute the estimate in Corollary~\ref{cor:quant-PL} into the above argument.
\end{proof}

\begin{exa} \label{exa:rescale-factors}
Suppose $\varphi^* \omega_0 = \sum_{j = 1}^n c_j^2 \, dx_j \wedge dy_j$, $c_j > 0$.
Then the construction in the previous lemma yields $\psi (r_1, \theta_1, \ldots, r_n, \theta_n) = (c_1 r_1, \theta_1, \ldots, c_n r_n, \theta_n)$, where $x_j = r_j \cos \theta_j$ and $y_j = r_j \sin \theta_j$ are polar coordinates on each $\R^2$-factor. \qed
\end{exa}

\begin{rmk} \label{rmk:arbitrary-metric}
Suppose that $g = \langle \cdot, A (x) \cdot \rangle$ is a Riemannian metric on $\R^m$, where $A (x)$ is a non-singular symmetric matrix that depends smoothly on $x \in \R^m$.
Then
\[ \| A^{- 1} (x) \|^{- k / 2} \| v^* \|_{g_0} \le \| v^* \|_g \le \| A (x) \|^{k / 2} \| v^* \|_{g_0} \]
for any $k$-covector $v^* \in \Lambda^k (T_x \R^m)$, where $\| A (x) \| = \sup \{ \| A (x) v \|_2 \mid \| v \|_ 2 = 1 \}$, and
\[ \| A^{- 1} \|^{- k / 2} \| \beta \|_{g_0} \le \| \beta \|_g \le \| A \|^{k / 2} \| \beta \|_{g_0} \]
for any $k$-form $\beta$, where $\| A \| = \sup_{x \in U} \| A (x) \|$.
In particular, all estimates with respect to the standard metric $g_0$ hold for an arbitrary Riemannian metric $g$ up to a constant factor that depends on the metric $g$ only. \qed
\end{rmk}

\section{Linear epsilon-non-squeezing and non-expanding} \label{sec:lin-eps-non-squeez}
We will show in this section that linear $\epsilon$-symplectic maps are characterized by the property that they preserve the linear symplectic width of ellipsoids up to an error that depends continuously on $\epsilon$ and converges to zero as $\epsilon \to 0^+$.
The key observation is that the failure to be symplectic can be expressed quantitatively in terms of the symplectic spectrum of ellipsoids (centered at the origin).

We identify $\R^{2 n}$ with $\C^n$ in the usual way with $z = (x, y)$ corresponding to $x + i y$ for $x, y \in \R^n$.
Recall that with this identification, $\Sp (2 n) \cap \O (2 n) = \U (n)$, where $\Sp (2 n)$,  $\O (2 n)$, and $\U (n)$ denote the groups of symplectic, orthogonal, and unitary matrices, respectively.
We do not distinguish between a matrix and the linear map $\R^{2 n} \to \R^{2 n}$ or $\C^n \to \C^n$ it represents.

\begin{rmk}
For a singular matrix $\Phi$ we have $\| \Phi^* \omega_0 - \omega_0 \|_2 \ge 1$ (cf.\ the proof of Lemma~\ref{lem:eps-symp-approx-symp}).
Thus an $\epsilon$-symplectic matrix with $\epsilon < 1$ is always non-singular. \qed
\end{rmk}

For a non-singular matrix $A$, denote the ellipsoid $A B_1^{2 n}$ (the image of the closed unit ball) by $E (A) = \{ z \in \R^{2 n} \mid \langle z, ((A^{- 1})^T A^{- 1}) \, z \rangle \le 1 \}$.
For $0 < r_1 \le \cdots \le r_n$, consider the diagonal matrix $\Delta (r_1, \ldots, r_n) \colon \C^n \to \C^n$ whose diagonal entries are $r_1, \ldots, r_n$ (in that order), and abbreviate $E (r_1, \ldots, r_n) = E (\Delta (r_1, \ldots, r_n))$, i.e.
\[ E (r_1, \ldots, r_n) = \left\{ z \in \C^n \mid \sum_{j = 1}^n \left| \frac{z_j}{r_j} \right|^2 \le 1 \right\}. \]
Recall that for each ellipsoid $E (A)$, there exists a symplectic matrix $\Psi$ such that $\Psi E (A) = E (r_1, \ldots, r_n)$ for some $n$-tuple $(r_1, \ldots, r_n)$ with $0 < r_1 \le \cdots \le r_n$, and that is uniquely determined by $A$.
It is called the symplectic spectrum of $E (A)$, and the number $r_1$ is its linear symplectic width \cite[Section~2.4]{mcduff:ist17}.
In fact, $r_j^2 = \alpha_j$, where $\pm i \alpha_j$ are the (purely imaginary) eigenvalues (counted with multiplicities) of the matrix $A^T J_0 A$ \cite[Lemma~2.4.6]{mcduff:ist17}.
Recall in this context that $A$ is symplectic if and only if $A^T J_0 A = J_0$, and the latter has eigenvalues $\pm i$ (with multiplicity $n$).
We will generalize the following lemma to a quantitative result for $\epsilon$-symplectic matrices.

\begin{thm} \label{thm:symp-spectrum}
A linear map $\Phi \colon \R^{2 n} \to \R^{2 n}$ is symplectic or anti-symplectic if and only if it preserves the symplectic spectrum of ellipsoids (centered at the origin).
\end{thm}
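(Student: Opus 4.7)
My plan is to work with the auxiliary matrix $J' := \Phi^T J_0 \Phi$ throughout. The forward direction is a one-line observation: if $J' = \epsilon J_0$ with $\epsilon = \pm 1$, then $(\Phi A)^T J_0 (\Phi A) = \epsilon A^T J_0 A$, and the multiset $\{\pm i \alpha_j\}$ of purely imaginary eigenvalues is invariant under the sign flip, so every ellipsoid $E(A)$ retains its symplectic spectrum.

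For the converse, I would begin by applying the hypothesis to the unit ball $B_1^{2n}$, whose symplectic spectrum is $(1, \ldots, 1)$: this forces the eigenvalues of $J'$ to be $\pm i$ each with multiplicity $n$, hence $J'^2 = -I$, and in particular $\Phi$ is non-singular. Next I would use the similarity $A^T J_0 A \sim A A^T J_0 = G J_0$ with $G = A A^T$ to rephrase the hypothesis: as $G$ sweeps out all positive definite symmetric matrices (which it does as $A$ varies), the eigenvalues of $G J_0$ and $G J'$ must agree for every such $G$.

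The key step is to upgrade this scalar spectral equality to the pointwise matrix identity $J_0 G J_0 = J' G J'$ for every symmetric $G$. Matching the second elementary symmetric function of the eigenvalues gives $\operatorname{tr}((G J_0)^2) = \operatorname{tr}((G J')^2)$ for every $G > 0$; since both sides are quadratic polynomials in the entries of $G$ that coincide on an open set, they agree for every symmetric $G$. Polarizing the symmetric bilinear form $B(G_1, G_2) = \operatorname{tr}(G_1 J_0 G_2 J_0)$ (symmetric in its two arguments by cyclicity of trace) and invoking non-degeneracy of the trace pairing on symmetric matrices then yields the desired pointwise identity.

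To conclude, I would set $U = J'^{-1} J_0$. A short calculation using $J'^{-1} = -J'$ and $J_0^T = -J_0$ gives $U^T = -J_0 J'$ and $U U^T = I$, so $U$ is orthogonal; the identity above rewrites as $U G U^T = G$, equivalently $U G = G U$, for every symmetric $G$. Testing $G = e_i e_i^T$ yields $U e_i = \pm e_i$, and testing $G = e_i e_j^T + e_j e_i^T$ forces the signs to agree across indices, so $U = \pm I$, i.e.\ $\Phi^T J_0 \Phi = \pm J_0$, and $\Phi$ is symplectic or anti-symplectic. The main obstacle will be the polarization/non-degeneracy step bridging the scalar trace invariants and the matrix identity; once this is in hand, the remainder is a short centralizer computation.
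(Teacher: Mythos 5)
Your proof is correct, and it takes a genuinely different and more self-contained route than the paper's. The paper deduces the theorem from McDuff--Salamon's Theorem~2.4.4 (linear maps are symplectic or anti-symplectic if and only if they preserve the \emph{linear symplectic width} of centered ellipsoids), and then argues that preserving the smallest spectral value propagates to the full spectrum via a perturbation by diagonal matrices --- an argument which, as stated, assumes that such diagonal matrices commute with $\Phi$ and would require further justification for an arbitrary linear $\Phi$. Your proof is pure linear algebra with no reliance on the non-squeezing machinery behind the cited theorem: you encode spectrum-preservation as equality of the eigenvalues of $GJ_0$ and $GJ'$ for every positive definite symmetric $G$, upgrade this scalar statement to the pointwise matrix identity $J_0 G J_0 = J' G J'$ by polarizing the quadratic form $G \mapsto \operatorname{tr}\bigl((GJ)^2\bigr)$ on the open cone of positive matrices (using non-degeneracy of the trace pairing on symmetric matrices and the observation that $J_0 G J_0 - J' G J'$ is itself symmetric), and finish with a short centralizer computation showing $J'^{-1}J_0 = \pm I$. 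This is cleaner and, since it never appeals to the ordering of the spectrum or to a perturbation near a specific ellipsoid, also more robust. One minor remark: the non-singularity of $\Phi$ is already implicit in the hypothesis (a singular $\Phi$ does not carry ellipsoids to ellipsoids, so their symplectic spectra would be undefined), so your derivation of it from $J'^2 = -I$ is an independent confirmation rather than a needed step.
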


\begin{proof}
By \cite[Theorem~2.4.4]{mcduff:ist17}, a linear map is symplectic or anti-symplectic if and only if it preserves the linear symplectic width of ellipsoids (centered at the origin).
It only remains to show that a linear map that preserves the linear symplectic width also preserves the entire symplectic spectrum of ellipsoids (centered at the origin).
This statement is implicitly contained in (the proof of) \cite[Lemma~2.4.6]{mcduff:ist17}; we will give a short direct argument here.

Assume that $\Phi$ preserves the linear symplectic width of ellipsoids centered at the origin.
Let $E$ be such an ellipsoid.
We may assume that $E = E (r_1, \ldots, r_n)$.
Write $(R_1, \ldots, R_n)$ for the symplectic spectrum of the image ellipsoid $\Phi E$.
Denote by $A = \Delta (1, \ldots, 1, a, 1, \ldots, 1) \colon \C^n \to \C^n$ the diagonal matrix with $a > 0$ in the $j$-th position and all other diagonal entries equal to $1$.
Then for $a$ sufficiently small, the linear symplectic width of the ellipsoid $A E (r_1, \ldots, r_n) = E (A \Delta (r_1, \ldots, r_n))$ is $a r_j$.
Since $A$ is a diagonal matrix, it commutes with $\Phi$, so that $\Phi E (A \Delta (r_1, \ldots, r_n)) = A \Phi E$.
The latter has linear symplectic width $a R_j$ for $a$ sufficiently small, and thus the assumption on the linear map $\Phi$ implies that $r_j = R_j$.
\end{proof}

We will later make use of the following lemma.

\begin{lem}
If $A$ is a non-singular matrix, $E (A)$ has linear symplectic width $r_1$, and $a > 0$, then the ellipsoid $E (a A)$ has linear symplectic width $a r_1$.
\end{lem}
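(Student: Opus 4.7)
The plan is to use the scaling behavior of ellipsoids together with the invariance of the symplectic spectrum under symplectic transformations, which is essentially a one-line argument once the definitions are unwound.

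First I would observe that $E(aA) = (aA) B_1^{2n} = a(A B_1^{2n}) = a \, E(A)$, i.e.\ dilating the matrix simply dilates the resulting ellipsoid. Next, by definition of the symplectic spectrum, there is a symplectic matrix $\Psi$ with $\Psi E(A) = E(r_1,\ldots,r_n)$. Applying $\Psi$ to both sides of the first identity, and using the linearity of $\Psi$, gives
\[ \Psi E(aA) = a \, \Psi E(A) = a \, E(r_1,\ldots,r_n) = E(ar_1,\ldots,ar_n). \]
Since $\Psi$ is symplectic, the symplectic spectrum of $E(aA)$ is therefore $(ar_1,\ldots,ar_n)$, so its linear symplectic width is $ar_1$.

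As a sanity check (and an alternative route), one can verify the scaling directly from the algebraic description recalled before the lemma: $r_j^2 = \alpha_j$ where $\pm i\alpha_j$ are the eigenvalues of $A^T J_0 A$. Replacing $A$ by $aA$ replaces $A^T J_0 A$ by $a^2 A^T J_0 A$, whose eigenvalues are scaled by $a^2$, so each $r_j$ becomes $ar_j$. There is no real obstacle here; the only point to be careful about is that $a > 0$ ensures $aA$ is still non-singular and that the dilation $z \mapsto az$ commutes with $\Psi$ (trivially, since $\Psi$ is linear), so no absolute values or orientation issues arise.
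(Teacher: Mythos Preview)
Your argument is correct and is essentially the same as the paper's, just spelled out in more detail: the paper's entire proof is the single sentence ``The linear map $x \mapsto a x$ is conformally symplectic,'' which is exactly the mechanism behind your computation $\Psi E(aA) = a\,\Psi E(A) = E(ar_1,\ldots,ar_n)$. Your eigenvalue sanity check via $(aA)^T J_0 (aA) = a^2 A^T J_0 A$ is a nice alternative confirmation not mentioned in the paper.
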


\begin{proof}
The linear map $x \mapsto a x$ is conformally symplectic.
\end{proof}

The following definition is motivated by Lemma~\ref{lem:eps-symp-approx-symp} and Example~\ref{exa:rescale-factors}.

\begin{dfn}[Epsilon-non-squeezing and non-expanding] \label{dfn:eps-non-squeeze-expand}
Let $0 \le \epsilon < 1$, and $0 < \rho = \sqrt{1 - \epsilon} \le 1$.
For a non-singular matrix $A$, let $r_A > 0$ be the smallest number such that the ellipsoid $E (A) \subset B_{r_A}^{2 n}$, and let $s_A = (1 + \| A^{- 1} \| (\rho^{- 1} - 1) r_A)^{- 1} \le 1$.
If $\| A^{- 1} \| (\rho^{- 1} - 1) r_A < 1$, define $e_A = (1 - \| A^{- 1} \| (\rho^{- 1} - 1) r_A)^{- 1} \ge 1$.

(a) A linear map $\Phi$ has the linear $\epsilon$-non-squeezing property if for each ellipsoid $E (A)$ with linear symplectic width $r_1$ such that the image ellipsoid $\Phi E (A)$ has linear symplectic width $R_1$, the inequality $s_A \, r_1 \le R_1$ holds.

(b) The linear map $\Phi$ has the linear $\epsilon$-non-expanding property if (for $r > 0$) the linear symplectic width of the ellipsoid $\Phi B_r^{2 n}$ is at most $\rho^{- 1} r$, and moreover, for each ellipsoid $E (A)$ with linear symplectic width $r_1$ and $\| A^{- 1} \| (\rho^{- 1} - 1) r_A < 1$, the linear symplectic width $R_1$ of the image ellipsoid $\Phi E (A)$ satisfies the inequality $R_1 \le e_A \, r_1$.
\end{dfn}

\begin{proof}[Proof of Proposition~\ref{pro:eps-non-squeez-expand}]
Let $\rho = \sqrt{1 - \epsilon'}$, and $\psi \colon \R^{2 n} \to \R^{2 n}$ be the embedding from Lemma~\ref{lem:eps-symp-approx-symp}, so that the composition $\Phi \circ \psi$ is symplectic.
Let $E (A)$ be as in Definition~\ref{dfn:eps-non-squeeze-expand}(a) with $\epsilon$ replaced by $\epsilon'$ everywhere.
Then $x \in E (s_A A)$ implies
\[ \| A^{- 1} (\psi (x)) \|_2 \le \| A^{- 1} x \|_2 + \| A^{- 1} \| \| \psi (x) - x \|_2 \le s_A + \| A^{- 1} \| (\rho^{- 1} - 1) s_A r_A \le 1, \]
i.e.\ $\psi (E (s_A A)) \subset E (A)$, and in particular, $(\Phi \circ \psi) E (s_A A) \subset \Phi E (A)$.
Note that $\psi$ need not be linear in general.
However, the restriction of any (relative) symplectic capacity to ellipsoids (centered at the origin) equals (up to a factor $\pi$) the square of the linear symplectic width \cite[Example~12.1.7]{mcduff:ist17} (see also section~\ref{sec:rigidity}), and hence the above inclusion implies $s_A \, r_1 \le R_1$.
Thus $\Phi$ is $\epsilon'$-non-squeezing.

In the two situations of Definition~\ref{dfn:eps-non-squeeze-expand}(b), $\psi (B_{\rho^{- 1} r}^{2 n}) \supset B_r^{2 n}$, and $x \in \partial E (e_A A)$ implies $\| A^{- 1} (\psi (x)) \|_2 \ge 1$, respectively.
The argument for $\epsilon'$-non-expanding is then analogous to the above argument for $\epsilon'$-non-squeezing.
\end{proof}

\begin{rmk} \label{rmk:non-squeez-fail}
There exists no non-squeezing result in any form for $\epsilon \ge 1$.
Indeed, for any $\delta > 0$, the linear map $(x_1, y_1, \ldots, x_n, y_n) \mapsto (\delta x_1, \delta y_1, x_2, y_2, \ldots, x_n, y_n)$ is $1$-symplectic and maps the unit ball to a (symplectic) cylinder of radius $\delta$.
More generally, let $\omega$ be a symplectic form and $\delta$ be the non-degeneracy radius around $\omega$, i.e.\ the supremum over all numbers $d$ so that $\| \omega' - \omega \|_2 \le d$ implies that $\omega'$ is non-degenerate.
Again use Moser's argument for any $d < \delta$.
Thus $\epsilon$-symplectic does not guarantee any form of non-squeezing beyond (and at) the threshold $\epsilon = \delta$. \qed
\end{rmk}

We will use the following obvious remark in the proof of Theorem~\ref{thm:eps-symp-rig} below.

\begin{rmk} \label{rmk:eps-non-squeezing-comp-symp}
Let $\Phi$ be a linear map, and $\Psi$ be a symplectic or anti-symplectic linear map.
Then $\Phi$ has the linear $\epsilon$-non-squeezing ($\epsilon$-non-expanding) property if and only if $\Psi \circ \Phi$ does.
Compare to Remark~\ref{rmk:eps-symp-comp-symp}. \qed
\end{rmk}

\begin{lem} \label{lem:singular-matrix}
A bounded subset $U \subset \R^{2 n}$ that is contained in a hyperplane $H$ is contained in an ellipsoid of arbitrarily small linear symplectic width.
In particular, a singular matrix $\Phi$ does not have the linear $\epsilon$-non-squeezing property for any $\epsilon$.
\end{lem}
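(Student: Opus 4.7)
The plan is to reduce to a standard hyperplane by a linear symplectic change of coordinates, and then squeeze $U$ into a thin standard ellipsoid via a further symplectic rescaling in the last $\C$-factor.

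For the normal-form step, I would use that any linear hyperplane $H \subset \R^{2n}$ is coisotropic: the symplectic complement $H^\omega$ is one-dimensional and contained in $H$. Choosing a nonzero $v \in H^\omega$ and completing to a symplectic basis with $v$ as the last ``$x$-vector'' produces a linear symplectic isomorphism $\Psi$ with $\Psi(H) = \{y_n = 0\}$. Since $\Psi$ takes ellipsoids centered at the origin to ellipsoids centered at the origin and preserves their linear symplectic widths, it suffices to enclose $\Psi(U) \subset \{y_n = 0\}$ in an ellipsoid of small width.

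Next I would apply the linear symplectic rescaling $\Lambda_c \colon (x_n, y_n) \mapsto (x_n/c, c y_n)$, which fixes $\{y_n = 0\}$ setwise but compresses the $x_n$-coordinate by the factor $c$. If $R$ is an upper bound for the norms of points in $\Psi(U)$, then on $\Lambda_c \Psi(U)$ every coordinate is bounded by $R$ except $x_n$, which is bounded by $R/c$ (and $y_n \equiv 0$). A crude estimate then shows $\Lambda_c \Psi(U) \subset E(R_0, \ldots, R_0, r_c)$ with $R_0 = R\sqrt{2n}$ and $r_c = R\sqrt{2}/c$; for $c$ large enough, $r_c < R_0$ is the smallest semi-axis, so this standard ellipsoid has linear symplectic width $r_c$. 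Letting $c \to \infty$ and pulling back by the symplectic map $(\Lambda_c \Psi)^{-1}$ then produces ellipsoids (centered at the origin) containing $U$ with linear symplectic widths tending to $0$.

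The ``in particular'' is almost immediate: if $\Phi$ is singular, then $\Phi(B_r^{2n})$ is bounded (being the continuous image of a compact set) and contained in the proper linear subspace $\mathrm{Im}(\Phi)$, hence in a linear hyperplane. Specializing Definition~\ref{dfn:eps-non-squeeze-expand}(a) to $A = rI$ gives $r_A = r$, $r_1 = r$, and $s_A = \sqrt{1-\epsilon} > 0$, so the non-squeezing property would force $R_1 \ge s_A r > 0$ for the image ``ellipsoid'' $\Phi B_r^{2n}$; but the first part gives enclosing ellipsoids of linear symplectic width tending to $0$, contradicting any such positive lower bound.

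The main point to watch is that the rescaling $\Lambda_c$ is only harmless because $\Psi(U) \subset \{y_n = 0\}$: symplectically shrinking $x_n$ forces stretching $y_n$, which would be fatal otherwise. I would also note that the argument genuinely uses that $H$ is linear (through the origin), since the ellipsoids $E(A)$ in the paper are centered at the origin; this is exactly the case needed for the application to singular matrices.
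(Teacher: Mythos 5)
Your proposal is correct and follows essentially the same route as the paper: find a linear symplectic normal form that places $H$ in standard position (you do this via a generator of $H^\omega$, the paper via a vector $u$ metrically orthogonal to $H$ and a suitable $v\in H$), and then squeeze the conjugate direction to land $U$ in an ellipsoid, respectively the cylinder $Z_R^{2n}$, of small linear symplectic width. One small arithmetic slip: your constant $R_0 = R\sqrt{2n}$ is too small once $n\ge 3$ (e.g.\ $R_0 = 2R\sqrt{n}$ works), but this is immaterial since the argument only needs some $c$-independent $R_0$.
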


\begin{proof}
Let $u$ be a vector that is orthogonal to $H$, and $v$ be a vector that belongs to $H$ so that $\omega_0 (u, v) > 0$.
After rescaling $v$ if necessary, we may assume that the absolute value of the scalar projection to $v$ of any vector in $U$ is bounded by $1$.
Let $R > 0$.
After rescaling $u$ if necessary, we may assume that $\omega_0 (u, v) = R^2$.
Choose a symplectic basis $\{ u_1, \ldots, u_n, v_1, \ldots, v_n \}$ of $\R^{2 n}$ so that $u_1 = R^{- 1} u$ and $v_1 = R^{- 1} v$, and a symplectic matrix $\Psi$ that maps this basis to the standard basis of $\R^{2 n}$.
Then $\Psi U \subset Z_R^{2 n} = B_R^2 \times \R^{2 n - 2}$.
That proves the first claim.
The second claim follows by considering $U = \Phi B_1^{2 n}$ and any (positive) radius $R < \rho$.
\end{proof}

\begin{proof}[Proof of Theorem~\ref{thm:eps-non-squeez-expand}]
The proof is given in six steps.

\emph{Step 1.} Apply Lemma~\ref{lem:standard-form} to $\omega = \Phi^* \omega_0$ (with $A = \Phi^T J_0 \Phi$) to find an orthonormal basis $B = \{ u_1, \ldots, u_n, v_1, \ldots, v_n \}$ of $\R^{2 n}$ and numbers $0 \le \lambda_1 \le \cdots \le \lambda_n$ so that $(\Phi^* \omega_0) (u_j, v_k) = \delta_{j k} \lambda_j^2$ and $(\Phi^* \omega_0) (u_j, u_k) = (\Phi^* \omega_0) (v_j, v_k) = 0$ for $1 \le j, k \le n$.
By Lemma~\ref{lem:singular-matrix}, we may assume that $\lambda_1 > 0$, and therefore $\Phi B$ is (up to rescaling) a symplectic basis of $\R^{2 n}$.
By composing $\Phi$ (on the left) with a symplectic matrix, we may assume that $\Phi u_j = \lambda_j e_j$ and $\Phi v_j = \lambda_j f_j$, where $\{ e_1, \ldots, e_n, f_1, \ldots, f_n \}$ denotes the standard symplectic basis of $\R^{2 n}$.
In particular, $\Phi$ maps the unit ball $B_1^{2 n}$ to the standard ellipsoid $E (\lambda_1, \ldots, \lambda_n)$.
The hypotheses imply that $\rho \le \lambda_1 \le \rho^{- 1}$.

\emph{Step 2.}
For $j = 1, \ldots, n$, write $\mu_j = \sqrt{| \omega_0 (u_j, v_j) |} \le 1$.
Fix an index $j$ with $1 \le j \le n$, and abbreviate $u = u_j$, $v = v_j$, $\lambda = \lambda_j$, and $\mu = \mu_j$.
Let $0 < a \le 1$.
For the remainder of this proof, let $A$ denote the linear map defined by $A u = a u$, $A v = a v$, and $A$ is the identity on $S = \textrm{span} (B \backslash \{ u, v \})$.
Write $r_1$ for the linear symplectic width of the ellipsoid $E (A)$.
The volume of $E (A)$ yields the constraints $a \le r_1 \le \sqrt[n]{a}$.
We will improve these estimates to symplectic estimates and in terms of the number $0 \le \mu \le 1$ as follows.

There exists a unit vector $w \in S$ and $0 \le s, t \le 1$ with $s^2 + t^2 = 1$ so that the vectors $u$ and $J_0 u = s v + t w$ span a unitary disk $D \subset B_1^{2 n}$ of radius $1$, and in particular, $| \omega_0 (u, s v + t w) | = 1$.
By the Cauchy-Schwarz inequality, the latter implies that $s = | \omega_0 (u, v) | = \mu$ and $t = | \omega_0 (u, w) | = \sqrt{1 - \mu^2}$.
Therefore
\begin{align} \label{eqn:lin-symp-width-estimate}
a^2 \le r_1^2 = | \omega_0 (a u, a \mu v + \sqrt{1 - \mu^2} \, w) | = a^2 \mu^2 + a (1 - \mu^2) \le a.
\end{align}
On the other hand, the linear symplectic width $R_1$ of the ellipsoid $\Phi E (A)$ is the smaller of the two numbers $\lambda_1$ and $a \lambda$.

\emph{Step 3.}
By the $\epsilon$-non-expanding hypothesis, $\min (\lambda_1, a \lambda) \le e_A r_1$ for all numbers $a$ with $\rho^{- 1} - 1 < a \le 1$ (so that the number $e_A$ is well-defined).
We will show that for appropriate choices of $a$, $\rho > e_A \sqrt{a}$, and thus by step 1 and by (\ref{eqn:lin-symp-width-estimate}), $\lambda_1 \ge \rho > e_A \sqrt{a} \ge e_A r_1$.
Then $a \lambda \le e_A r_1 \le e_A a$.
Therefore $\lambda_j \le e_A < \rho \, a^{- 1 / 2}$ for all $j = 1, \ldots, n$ and for all numbers $a$ as above.

Consider the function $f (a) = a^{3 / 2} - \rho a + 1 - \rho$, $\rho^{- 1} - 1 < a \le 1$.
Then the condition $\rho > e_A \sqrt{a}$ translates to the inequality $f (a) < 0$.
The (absolute) minimum of the function $f (a)$ is achieved at the point $a = \frac{4}{9} \rho^2$.
Therefore the inequality $f (a) < 0$ has a solution if and only if $f (\frac{4}{9} \rho^2) = - \frac{4}{27} \rho^3 + 1 - \rho < 0$.
The cubic equation $z^3 + \frac{27}{4} z = \frac{27}{4}$ has a single real root $z_0 = \frac{3}{2} ( (1 + \sqrt{2})^{1 / 3} + (1 - \sqrt{2}))^{1 / 3} )$.
Thus for $\epsilon < 1 - z_0^2$, the inequality $\rho > e_A \sqrt{a}$ can be solved for some $a \ge \frac{4}{9} \rho^2$.
Note that $\frac{4}{9} \rho^2 > \rho^{- 1} - 1$ is equivalent to $\frac{4}{9} \rho^3 + \rho - 1 > 0$, and the latter is greater or equal to $- f (\frac{4}{9} \rho^2)$ and thus indeed is positive.

The equation $f (a) = 0$ can be solved in closed form by making the substitution $a = (c \rho)^2$, which leads to the cubic equation $c^3 - c^2 + \rho^{- 3} (1 - \rho) = 0$.
Since $f (\frac{4}{9} \rho^2) < 0$, $f (\rho^2) = 1 - \rho \ge 0$, and $f' (a) > 0$ for $a > \frac{4}{9} \rho^2$, there exists a single real root $c_\rho$ with $2 / 3 < c_\rho \le 1$.
By the last sentence of the first paragraph of this step, $\lambda_j \le c_\rho^{- 1}$ for all $j = 1, \ldots, n$.
For a more explicit estimate in terms of $\rho$, observe that $f (\rho^6) = (\rho^7 (\rho + 1) - 1) (\rho - 1)$, and $\rho^7 (\rho + 1) - 1 \ge 2 \rho^8 - 1 > 0$, provided that $\rho > (\frac{1}{2})^{1 / 8}$.
Therefore if $\epsilon < 1 - (\frac{1}{2})^{1 / 4}$, then $\lambda_j \le \rho^{- 2}$ for all $j = 1, \ldots, n$.
In particular, $c_\rho \to 1^-$ as $\rho \to 1^-$.

\emph{Step 4.}
As in step~2, fix an index $j$, and drop the subscripts from the notation.
We will prove in this step that $\mu \to 1^-$ as $\rho \to 1^-$.

Define the function $g (a) = (\mu^2 - \lambda^2) a^2 + (1 - \mu^2 - 2 \lambda^2 (\rho^{- 1} - 1)) a - (\lambda (\rho^{- 1} - 1))^2$.
Again by (\ref{eqn:lin-symp-width-estimate}), the $\epsilon$-non-squeezing hypothesis $s_A r_1 \le \min (\lambda_1, a \lambda) \le a \lambda$ for all $a > 0$ guarantees that $g (a) \le 0$ for all $0 < a \le 1$.
If $\mu \ge \lambda$, then $\mu \ge \rho \to 1^-$ as $\rho \to 1^-$, and there is nothing more to prove.
Thus we assume henceforth that $\mu < \lambda$.

The (absolute) maximum of $g (a)$ is achieved at the point
\[ a_0 = \frac{1 - \mu^2 - 2 \lambda^2 (\rho^{- 1} - 1)}{2 (\lambda^2 - \mu^2)}. \]
We derive at a contradiction if the maximum $g (a_0)$ is positive and $0 < a_0 \le 1$.
We distinguish three cases:

Case (i).
$a_0 \le 0$.
Since $\mu < \lambda$, this is equivalent to $1 - \mu^2 - 2 \lambda^2 (\rho^{- 1} - 1) \le 0$.
Then $\mu \ge \sqrt{1 - 2 \lambda^2 (\rho^{- 1} - 1)} \to 1^-$ as $\rho \to 1^-$ since $\lambda$ is bounded by step~3.

Case (ii).
$a_0 > 1$, or equivalently, $\mu > \sqrt{2 \lambda^2 \rho^{- 1} - 1}$.
Since $\lambda \ge \lambda_1 \ge \rho$, the latter is bounded from below by $\sqrt{2 \rho - 1} \to 1^-$ as $\rho \to 1^-$.

Case (iii).
$0 < a_0 \le 1$ and $g (a_0) \le 0$.
The latter is equivalent to the inequality
\[ \mu^2 - 2 \lambda (\rho^{- 1} - 1) \sqrt{\lambda^2 - \mu^2} \ge 1 - 2 \lambda^2 (\rho^{- 1} - 1), \]
and in particular, $\mu \ge \sqrt{1 - 2 \lambda^2 (\rho^{- 1} - 1)} \to 1^-$ as $\rho \to 1^-$ as in case (i).

In particular, from cases (i) to (iii) we deduce that $\mu > 0$; since $c_\rho > 2 / 3$, it suffices that the condition $\epsilon < 1 - z_0^2$ from step 3 implies that $\rho \ge 9 / 11$.

\emph{Step 5.}
In this step, we use a standard non-squeezing argument to show that the numbers $\omega_0 (u_j, v_j) = \pm \mu_j^2$ all have the same sign for $1 \le j \le n$.

By composing $\Phi$ (on the left) with a diagonal matrix $\Psi$ with entries equal to $\pm 1$, we may assume that for each $j$ the pairs of numbers $\omega_0 (u_j, v_j)$ and $\omega_0 (\Phi u_j, \Phi v_j)$ have the same sign, and by rearranging the basis $B$ from step 1 if necessary, that these numbers are all positive.
We will see in the next step that then $\Psi \circ \Phi$ is $\epsilon'$-symplectic for some $\epsilon' \ge 0$ with $\epsilon' \to 0^+$ as $\epsilon \to 0^+$.
In particular, for $\epsilon' < 1 / \sqrt{2}$, $\Psi \circ \Phi$ is $\epsilon''$-non-squeezing for some $\epsilon'' \ge 0$ by Proposition~\ref{pro:eps-non-squeez-expand}.
But by a standard squeezing argument (see the proof of \cite[Theorem~2.4.2]{mcduff:ist17}), $\Psi$ squeezes the unit ball $B_1^{2 n}$ into a symplectic cylinder of arbitrarily small radius unless its diagonal entries all have the same sign.
That shows that $\Psi$ must be symplectic or anti-symplectic.

\emph{Step 6.}
By post-composing with the anti-symplectic matrix $\Psi$ from the previous step if necessary, we may assume that $\omega_0 (u_j, v_j) = \mu_j^2$ for all $1 \le j \le n$.
Recall that $\| \omega_0 \|_2 = \sqrt{n}$.
Thus
\begin{align*}
\| \Phi^* \omega_0 - \omega_0 \|_2^2 = \sum_{j = 1}^n (\lambda_j^2 - \mu_j^2)^2 + (n - \sum_{j = 1}^n \mu_j^4) \to 0^+
\end{align*}
as $\rho \to 1^-$, or equivalently, as $\epsilon \to 0^+$.
That proves the theorem.
\end{proof}

\begin{rmk}
Let the matrix $A$ and basis $B$ be as in the preceding proof, and also write $B$ for the matrix with columns the vectors in $B$.
Since $B$ is orthogonal, it is symplectic if and only if it is also complex, i.e.\ commutes with $J_0$.
The deviation of $A$ from being conformally symplectic is measured by the symplectic spectrum of $E (A)$, see (\ref{eqn:lin-symp-width-estimate}).
A measure of the failure of $\Phi$ to be symplectic is therefore the collection of numbers $\lambda_j$ (conformality) and $\pm \mu_j$ (failure to commute with $J_0$). \qed
\end{rmk}

\section{Capacities and rigidity} \label{sec:rigidity}
In this section we prove Theorem~\ref{thm:eps-symp-rig}.
The proof follows closely the argument in the symplectic case ($\epsilon = 0$) given in \cite[Section~12.2]{mcduff:ist17}.

Recall (from \cite[Section~12.1]{mcduff:ist17}) that a (normalized symplectic) capacity on $\R^{2 n}$ is a functor $c$ that assign to an (arbitrary) subset $U \subset \R^{2 n}$ a non-negative (possibly infinite) number $c (U)$ such that the following axioms hold:
\begin{itemize}
\item (monotonicity) if there exists a symplectic embedding $\psi \colon U \to \R^{2 n}$ such that $\psi (U) \subset V$, then $c (U) \le c (V)$,
\item (conformality) $c (a U) = a^2 c (U)$, and
\item (normalization) $ c (B_1^{2 n}) = \pi = c (Z_1^{2 n})$.
\end{itemize}
Moreover, the restriction of any capacity $c$ to ellipsoids (centered at the origin) equals $c (E) = \pi r_1^2$, where $r_1$ denotes as before the linear symplectic width of $E$ \cite[Example~12.1.7]{mcduff:ist17}.
By the monotonicity and conformality axioms, if $a \ge 1$ and $U \subset \R^{2 n}$ are such that $a^{- 1} E \subset U \subset a E$, then $a^{- 2} c (E) \le c (U) \le a^2 c (E)$.
More generally, the restriction of a capacity to compact convex sets is continuous with respect to the Hausdorff metric \cite[Exercise~12.1.8]{mcduff:ist17}.
Therefore, an $\epsilon$-symplectic embedding preserves the capacity of ellipsoids up to an error that converges to zero as $\epsilon \to 0^+$, see Proposition~\ref{pro:eps-pres-cap}.
Note that translations in $\R^{2 n}$ are symplectic and thus preserve capacity, so we can consider ellipsoids with arbitrary center.

\begin{dfn}
Let $U \subset \R^{2 n}$ and $\varphi \colon U \to \R^{2 n}$ be an embedding.
Let $\epsilon \ge 0$, and $s_A \le 1$ and $e_A \ge 1$ be as in Definition~\ref{dfn:eps-non-squeeze-expand}.
Then $\varphi$ is said to preserve the capacity of ellipsoids up to $\epsilon$ if $s_A^2 c (E) \le c (\varphi (E)) \le e_A^2 c (E)$ for every ellipsoid $E = E (A) \subset U$ (where the second inequality holds whenever the number $e_A$ is defined). \qed
\end{dfn}

\begin{pro} \label{pro:eps-pres-cap}
Let $0 \le \epsilon \le 1 / \sqrt{2}$, and $\epsilon'$ be as in Proposition~\ref{pro:eps-non-squeez-expand}.
Then an $\epsilon$-symplectic embedding preserves the capacity of ellipsoids up to $\epsilon'$.
\end{pro}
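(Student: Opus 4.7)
The plan is to adapt the argument from the proof of Proposition~\ref{pro:eps-non-squeez-expand} to a non-linear embedding $\varphi$, working with a general symplectic capacity $c$ in place of the linear symplectic width (since for non-linear $\varphi$ the set $\varphi(E)$ is not an ellipsoid). The key input is Lemma~\ref{lem:eps-symp-approx-symp}, which corrects the $\epsilon$-symplectic embedding $\varphi$ to a genuinely symplectic embedding $\varphi \circ \psi$ by pre-composition with a map $\psi$ that differs from the identity in a controlled way. Since translations of $\R^{2n}$ are symplectic and preserve both capacity and the $\epsilon$-symplectic property, I first reduce to the case where the ellipsoid $E = E(A) \subset U$ is centered at the origin. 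Writing $r_A = \|A\|$, so that $E \subset B_{r_A}^{2n}$, I then apply Lemma~\ref{lem:eps-symp-approx-symp} to the restriction of $\varphi$ to a ball $B_r^{2n} \subset U$ chosen large enough that $E(e_A A) \subset B_{\rho r}^{2n}$ (whenever $e_A$ is defined), obtaining an embedding $\psi \colon B_{\rho r}^{2n} \to B_r^{2n}$ with $\psi(0) = 0$, with $\varphi \circ \psi$ symplectic, and satisfying $\|\psi(x) - x\|_2 \le (\rho^{-1} - 1) \|x\|_2$.

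The next step is to establish the pair of inclusions $\psi(E(s_A A)) \subset E(A) \subset \psi(E(e_A A))$, by exactly the computations in the proof of Proposition~\ref{pro:eps-non-squeez-expand}. For $x \in E(s_A A)$, the triangle inequality together with the bound $\|x\|_2 \le s_A r_A$ yields
\[ \|A^{-1} \psi(x)\|_2 \le \|A^{-1} x\|_2 + \|A^{-1}\|(\rho^{-1} - 1) \|x\|_2 \le s_A + \|A^{-1}\|(\rho^{-1} - 1) s_A r_A = 1, \]
which gives the first inclusion. For $x \in \partial E(e_A A)$, the reverse triangle inequality similarly gives $\|A^{-1} \psi(x)\|_2 \ge 1$, so $\psi(\partial E(e_A A))$ lies in $\R^{2n} \setminus \mathrm{int}(E(A))$; since $\psi$ is a homeomorphism fixing $0$, $\psi(E(e_A A))$ is a topological ball containing $0$, and any point $y \in E(A) \setminus \psi(E(e_A A))$ would give a straight-line segment from $0$ to $y$ lying inside the convex set $E(A)$ yet forced to cross $\partial \psi(E(e_A A))$, a contradiction. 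This yields the second inclusion.

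With both inclusions in hand, the monotonicity and conformality axioms of capacity, together with the fact that $\varphi \circ \psi$ is symplectic, yield
\[ s_A^2 \, c(E) = c(E(s_A A)) = c((\varphi \circ \psi)(E(s_A A))) \le c(\varphi(E)) \le c((\varphi \circ \psi)(E(e_A A))) = e_A^2 \, c(E), \]
which is the desired estimate. The main subtlety to resolve is the relationship between the value $\rho = \sqrt{1 - \epsilon'}$ used to define $s_A, e_A$ in Definition~\ref{dfn:eps-non-squeeze-expand} and the value $\rho = (1 - \sqrt{2}\,\epsilon)^{\sqrt{2n}}$ that Lemma~\ref{lem:eps-symp-approx-symp} supplies for a general non-linear $\varphi$; these agree in the linear case but differ in general, so the numerical constant $\epsilon'$ in the conclusion may have to be enlarged accordingly for non-linear $\varphi$. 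The inclusion argument above is insensitive to which $\rho$ is used, provided $s_A, e_A$ are computed with the same $\rho$ supplied by the lemma, and the resulting error still tends to $0^+$ as $\epsilon \to 0^+$.
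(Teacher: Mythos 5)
Your approach is the same one the paper intends: the paper's proof of this proposition is the single citation to Proposition~\ref{pro:eps-non-squeez-expand}, which is to be read as ``run the argument from the proof of Proposition~\ref{pro:eps-non-squeez-expand} again'', and that argument --- invoke Lemma~\ref{lem:eps-symp-approx-symp} to produce a symplectic $\varphi \circ \psi$, establish the inclusions $\psi(E(s_A A)) \subset E(A) \subset \psi(E(e_A A))$, then apply monotonicity and conformality of the capacity --- is exactly what you wrote out. The topological degree-type argument you supply for the second inclusion is correct and fills in what the paper compresses to ``the argument \ldots\ is analogous''. Your closing remark about the constant is also a genuine observation: Lemma~\ref{lem:eps-symp-approx-symp} gives $\rho = (1 - \sqrt{2}\,\epsilon)^{\sqrt{2n}}$ for a general embedding and the sharper $\rho = \sqrt{1 - \sqrt{2}\,\epsilon}$ only for linear $\varphi$, so a literal reading of the statement (with $\epsilon' = \sqrt{2}\,\epsilon$ from Proposition~\ref{pro:eps-non-squeez-expand}, hence $s_A, e_A$ computed from $\rho = \sqrt{1 - \epsilon'}$) is too strong for non-linear embeddings; the argument actually yields the conclusion with $\epsilon'$ replaced by $\epsilon'' = 1 - (1 - \sqrt{2}\,\epsilon)^{2\sqrt{2n}} \ge \epsilon'$. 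Since $\epsilon'' \to 0^+$ as $\epsilon \to 0^+$, this does not affect Theorem~\ref{thm:eps-symp-rig}, but it is an imprecision in the stated constant that you were right to flag.
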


\begin{proof}
Proposition~\ref{pro:eps-non-squeez-expand}.
\end{proof}

\begin{pro}
Let $\varphi_k \colon B_r^{2 n} \to \R^{2 n}$ be embeddings that preserve the capacity of ellipsoids up to $\epsilon$, and converge uniformly (on compact subsets) to an embedding $\varphi \colon B_r^{2 n} \to \R^{2 n}$.
Then the limit $\varphi$ again preserves the capacity of ellipsoids up to $\epsilon$.
\end{pro}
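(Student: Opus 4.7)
The plan is to sandwich $E$ between slightly smaller and larger concentric ellipsoids $E^\pm = (1 \pm \delta) E$, transfer the capacity estimates from each $\varphi_k$ to those ellipsoids, and pass to the limit. Two scaling observations do the bookkeeping: for $t > 0$ one has $E(tA) = t \cdot E(A)$, so $r_{tA} = t\, r_A$ and $\|(tA)^{-1}\| = t^{-1}\|A^{-1}\|$; hence the constants $s_A$ and $e_A$ from Definition~\ref{dfn:eps-non-squeeze-expand} are invariant under $A \mapsto (1\pm\delta)A$, while the conformality axiom gives $c(E^\pm) = (1\pm\delta)^2\, c(E)$.

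Assume first that $E$ is compactly contained in $B_r^{2n}$, so that $E^\pm \subset B_r^{2n}$ for small $\delta$. The technical heart is the two-sided inclusion
\begin{equation*}
\varphi_k(E^-) \subset \varphi(E) \subset \varphi_k(E^+), \quad k \text{ large}.
\end{equation*}
The first inclusion follows from uniform convergence plus openness: $\varphi(\operatorname{int} E)$ is open, contains the compact set $\varphi(E^-)$, and hence contains some $\eta$-neighborhood of it, into which $\varphi_k(E^-)$ must eventually fall. The second inclusion requires invariance of domain: each $\varphi_k(E^+)$ is a topological closed ball with topological boundary $\varphi_k(\partial E^+)$, which converges in Hausdorff distance to $\varphi(\partial E^+)$. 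Since $\varphi(E)$ sits at positive distance inside the bounded complementary component $\varphi(\operatorname{int} E^+)$ of $\varphi(\partial E^+)$, a standard degree-theoretic / Jordan--Brouwer argument places $\varphi(E)$ inside $\varphi_k(\operatorname{int} E^+)$ for all $k$ sufficiently large.

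Combining these inclusions with monotonicity of $c$ and with the hypothesis applied to $\varphi_k$ on $E^\pm$ yields
\begin{equation*}
s_A^2 (1-\delta)^2\, c(E) \le c(\varphi_k(E^-)) \le c(\varphi(E)) \le c(\varphi_k(E^+)) \le e_A^2 (1+\delta)^2\, c(E),
\end{equation*}
and sending $\delta \to 0^+$ gives $s_A^2\, c(E) \le c(\varphi(E)) \le e_A^2\, c(E)$. The case where $E$ is not compactly contained in $B_r^{2n}$ is handled by applying the result to $(1-\delta')E$ and letting $\delta' \to 0^+$. The main obstacle is the second sandwich inclusion $\varphi(E) \subset \varphi_k(E^+)$: the easier direction $\varphi_k(E) \subset \varphi(E^+)$ follows directly from uniform convergence, but capacity is guaranteed to be Hausdorff-continuous only on compact \emph{convex} sets, and the images here are not convex, so the topological ingredient via invariance of domain seems unavoidable.
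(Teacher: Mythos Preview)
Your argument is correct and is precisely the unpacking of the paper's one-line proof ``By definition and the above continuity properties of capacities'': the paper is implicitly invoking the sandwich $a^{-1}E \subset U \subset aE \Rightarrow a^{-2}c(E) \le c(U) \le a^2 c(E)$ together with the standard fact (cf.\ \cite[Lemma~12.2.3]{mcduff:ist17}, to which the paper's subsequent remark points) that uniform limits of continuous maps toward an embedding eventually cover any compact subset of the image---exactly the invariance-of-domain step you isolate for the inclusion $\varphi(E) \subset \varphi_k(E^+)$. Your observation that Hausdorff continuity on convex sets does not suffice here (the images are not convex) is spot on, and your use of the scaling invariance $s_{tA} = s_A$, $e_{tA} = e_A$ matches what the paper exploits in the very next proposition.
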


\begin{proof}
By definition and the above continuity properties of capacities.
\end{proof}

\begin{rmk}
It is actually not necessary to assume that the maps $\varphi_k$ in the preceding proposition are embeddings.
See \cite[Lemma~12.2.3]{mcduff:ist17}. \qed
\end{rmk}

\begin{pro} \label{pro:pres-cap-eps-symp-anti-symp}
Suppose that an embedding $\varphi \colon B_r^{2 n} \to \R^{2 n}$ preserves the capacity of ellipsoids up to $\epsilon$.
Then for $\epsilon \ge 0$ sufficiently small, $\varphi$ is $\epsilon'$-symplectic or $\epsilon'$-anti-symplectic, where $\epsilon' = K (\epsilon)$ is as in Theorem~\ref{thm:eps-non-squeez-expand} (and converges to zero as $\epsilon \to 0^+$).
\end{pro}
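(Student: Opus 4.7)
The plan is to differentiate the capacity hypothesis pointwise, apply the linear characterization provided by Theorem~\ref{thm:eps-non-squeez-expand} at each point, and then use connectedness of the ball to rule out the possibility that the $\epsilon'$-symplectic and $\epsilon'$-anti-symplectic alternatives coexist on $B_r^{2n}$. This closely parallels the classical Eliashberg-Gromov proof in \cite[Section~12.2]{mcduff:ist17}, with the linear symplectic rigidity there replaced by its quantitative $\epsilon$-version.

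First I would fix a point $x$ in the interior of $B_r^{2n}$ and an ellipsoid $E(A)$ centered at the origin, and apply the hypothesis to the shrinking family $E_\delta = x + \delta \cdot E(A)$ for $\delta > 0$ small. Since translations are symplectic, translation invariance and the conformality axiom give $c(E_\delta) = \delta^2 c(E(A))$, so the hypothesis reads $s_A^2 c(E(A)) \le \delta^{-2} c(\varphi(E_\delta)) \le e_A^2 c(E(A))$. By $C^1$-smoothness of $\varphi$, for every $\eta > 0$ and all sufficiently small $\delta$ the rescaled image $\delta^{-1}(\varphi(E_\delta) - \varphi(x))$ is sandwiched between $(1 - \eta) d\varphi(x) E(A)$ and $(1 + \eta) d\varphi(x) E(A)$; monotonicity, conformality, and translation invariance then give $(1-\eta)^2 c(d\varphi(x) E(A)) \le \delta^{-2} c(\varphi(E_\delta)) \le (1+\eta)^2 c(d\varphi(x) E(A))$. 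Letting $\delta, \eta \to 0$ and combining yields $s_A^2 c(E(A)) \le c(d\varphi(x) E(A)) \le e_A^2 c(E(A))$. Since $c$ on ellipsoids equals $\pi$ times the square of the linear symplectic width, this translates exactly to the linear $\epsilon$-non-squeezing and $\epsilon$-non-expanding properties of Definition~\ref{dfn:eps-non-squeeze-expand}. Note that $d\varphi(x)$ is automatically non-singular because $\varphi$ is an embedding.

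At this stage Theorem~\ref{thm:eps-non-squeez-expand} applies pointwise: for $\epsilon$ sufficiently small, each $d\varphi(x)$ is either $\epsilon'$-symplectic or $\epsilon'$-anti-symplectic, with $\epsilon' = K(\epsilon) \to 0^+$ as $\epsilon \to 0^+$. To globalize, I would consider the continuous functions $f_\pm(x) := \|d\varphi(x)^* \omega_0 \pm \omega_0\|_2$, which satisfy $\min(f_+(x), f_-(x)) \le \epsilon'$ at every $x$. The triangle inequality gives $f_+(x) + f_-(x) \ge 2 \| \omega_0 \|_2 = 2 \sqrt{n}$, so as soon as $\epsilon' < \sqrt{n}$ the closed sets $\{f_- \le \epsilon'\}$ and $\{f_+ \le \epsilon'\}$ are disjoint. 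They cover the connected set $B_r^{2n}$, so exactly one of them equals the entire ball; taking the supremum over $x$ yields the global bound $\| \varphi^* \omega_0 - \omega_0 \|_2 \le \epsilon'$ or $\| \varphi^* \omega_0 + \omega_0 \|_2 \le \epsilon'$.

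The main obstacle is the sandwiching step that justifies $\delta^{-2} c(\varphi(E_\delta)) \to c(d\varphi(x) E(A))$: the image $\varphi(E_\delta)$ is not convex a priori, so one cannot directly invoke Hausdorff continuity of $c$ on compact convex bodies \cite[Exercise~12.1.8]{mcduff:ist17}, and the convex sandwich together with monotonicity and conformality must be combined carefully to pass to the limit in both directions simultaneously. Beyond this quantitative linearization, the remainder of the argument is a routine $\epsilon$-version of the classical rigidity proof.
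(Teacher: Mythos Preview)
Your proposal is correct and follows essentially the same strategy as the paper: linearize the capacity hypothesis at each point, invoke Theorem~\ref{thm:eps-non-squeez-expand}, and use connectedness to make the alternative global. The paper packages the linearization step slightly differently---it observes that $s_{hA}=s_A$ and $e_{hA}=e_A$, so the conjugates $D_h^{-1}\circ\varphi\circ D_h$ (after translating so that $\varphi(x)=x=0$) again preserve capacity of ellipsoids up to $\epsilon$, and then appeals to the preceding closure-under-uniform-limits proposition to pass to $d\varphi(0)$---whereas you carry out the sandwiching $(1-\eta)\,d\varphi(x)E(A)\subset \delta^{-1}(\varphi(E_\delta)-\varphi(x))\subset (1+\eta)\,d\varphi(x)E(A)$ by hand; these are the same argument in different clothing, and your explicit treatment of the ``main obstacle'' is exactly what the limit-closure proposition hides. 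Your connectedness step via $f_\pm$ and the triangle inequality is also what the paper does, though the paper states the threshold as $\epsilon'<1$ rather than your sharper $\epsilon'<\sqrt{n}$.
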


\begin{proof}
Let $x \in B_r^{2 n}$.
By composing with translations (which are symplectic), we may assume that $\varphi (x) = x = 0$ is the origin in $\R^{2 n}$.
Recall that $D_h \colon \R^{2 n} \to \R^{2 n}$ denotes rescaling by the factor $h \not= 0$.
Note that for $| h | < 1$, the numbers $s_A$ and $e_A$ in Definition~\ref{dfn:eps-non-squeeze-expand} are rescaling invariant, i.e.\ $s_{h A} = s_A$ and $e_{h A} = e_A$.
Thus $(D_h^{- 1} \circ \varphi \circ D_h)$ preserves the capacity of ellipsoids up to $\epsilon$.
By Proposition~\ref{pro:eps-pres-cap}, the derivative $d \varphi (0) = \lim_{h \to 0} (D_h^{- 1} \circ \varphi \circ D_h)$ also preserves the capacity of ellipsoids up to $\epsilon$.
Then by Theorem~\ref{thm:eps-non-squeez-expand}, $d \varphi (0)$ is either $\epsilon'$-symplectic or $\epsilon'$-anti-symplectic.
For $\epsilon' < 1$, continuity of $d \varphi (x)$ implies that the latter is either $\epsilon'$-symplectic for all $x$ or $\epsilon'$-anti-symplectic for all $x$.
\end{proof}

\begin{proof}[Proof of Theorem~\ref{thm:eps-symp-rig}]
Suppose that $\varphi_k \colon (M_1, \omega_1) \to (M_2, \omega_2)$ is a sequence of $\epsilon$-symplectic embeddings that converges uniformly (on compact subsets) to the limit embedding $\varphi \colon (M_1, \omega_1) \to (M_2, \omega_2)$.
Let $x \in M_1$.
Since $\epsilon$-symplectic is a pointwise condition, we may assume that $(M_1, \omega_1) = (M_2, \omega_2) = (\R^{2 n}, \omega_0)$.
The constants $\delta$ and $E$ depend continuously on $x$, see Remark~\ref{rmk:arbitrary-metric}.
If $M_1$ is compact, let $\delta$ and $E$ be the minimum and maximum over all $x \in M_1$, respectively.
For non-compact manifolds, one needs to replace the constants $\epsilon$, $\delta$, and $E$ by continuous positive functions $\epsilon (x)$, $C (x)$, and $E (x)$ on $M_1$.

By Proposition~\ref{pro:eps-pres-cap}, the limit $\varphi$ preserves the capacity of ellipsoids up to $\epsilon'$.
Then by Proposition~\ref{pro:pres-cap-eps-symp-anti-symp}, $\varphi$ is $E$-symplectic or $E$-anti-symplectic, where $E = K (\epsilon') \to 0^+$ as $\epsilon \to 0^+$.
It only remains to show that the former alternative holds.

There are several ways to argue.
We observe that for $\epsilon$ sufficiently small, $\varphi_k$ is orientation preserving, and thus so is the limit $\varphi$.
If $n$ is odd, and $E$ is sufficiently small, then $\varphi$ cannot be $E$-anti-symplectic.
If $n$ is even, the same argument applies to the map $\varphi \times \id \colon M_1 \times \R^2 \to M_2 \times \R^2$.
Alternatively, let $x \in M_1$, $L$ be a Lagrangian torus in $M_1$ that contains $x$, and $U \subset M_1$ be a tubular neighborhood that can be symplectically identified with a neighborhood of the zero section in $T^* L$ with its standard symplectic structure.
For $k$ sufficiently large, $\varphi_k$ and $\varphi$ are homotopic, and thus induce the same maps on the cohomology groups of $L$.
A symplectic map sends $[\omega_0]$ to itself while an anti-symplectic map reverses sign.
Thus for $\epsilon$ sufficiently small, $\varphi$ must be $E$-symplectic.
\end{proof}

\begin{proof}[Proof of Corollary~\ref{cor:eps-symp-rig}]
Since $E \to 0$ as $\epsilon \to 0$, taking a subsequence of the sequence $\varphi_k$ and applying Theorem~\ref{thm:eps-symp-rig} shows that $\varphi$ is $E$-symplectic for any $E > 0$.
\end{proof}

\section{Shape invariant and epsilon-contact embeddings} \label{sec:shape-contact}
This section outlines an alternative proof of Theorem~\ref{thm:eps-symp-rig} and Corollary~\ref{cor:eps-symp-rig} using the shape invariant, and an adaptation of the proof to $\epsilon$-contact embeddings.
See \cite{eliashberg:nio91, mueller:csc17, sikorav:qpp91, sikorav:rsc89} for details on the shape invariant.

Let $(M, \omega = d\lambda)$ be an exact symplectic manifold of dimension $2 n$, and $T^n$ be an $n$-dimensional torus.
An embedding $\iota \colon T^n \hookrightarrow M$ is called Lagrangian if $\iota^* \omega = 0$; the cohomology class $[\iota^* \lambda] \in H^1(T^n, \R) = \R^n$ is called its $\lambda$-period.

\begin{dfn}[Shape invariant {\cite{eliashberg:nio91}}] \label{dfn:shape}
Let $\tau \colon H^1 (M, \R) \to H^1 (L, \R)$ be a homomorphism.
Then the shape $I (M, \omega ,\tau)$ is the subset of $H^1 (T^n, \R)$ that consists of all points $z \in H^1 (T^n, \R)$ such that there exists a Lagrangian embedding $\iota \colon T^n \hookrightarrow M$ with $\iota^* = \tau$ and $z = [\iota^* \lambda]$, defined up to translation. \qed
\end{dfn}

\begin{thm}[{\cite{eliashberg:nio91, sikorav:rsc89}}] \label{thm:torus-shape}
For $A \subset \R^n$ open and connected, $I (T^n \times A, \lambda_{\textrm can}, \iota_0^*) = A$.
\end{thm}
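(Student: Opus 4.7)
The plan is to prove the two inclusions $A \subseteq I(T^n \times A, \lambda_{\textrm{can}}, \iota_0^*)$ and $I(T^n \times A, \lambda_{\textrm{can}}, \iota_0^*) \subseteq A$ separately. The forward inclusion is elementary: for each $a \in A$, the fiberwise section $\iota_a(x) = (x, a)$ defines a Lagrangian embedding $T^n \hookrightarrow T^n \times A$ with $\iota_a^* \lambda_{\textrm{can}} = \sum_i a_i\,dx_i$, representing the cohomology class $a \in H^1(T^n, \R) \cong \R^n$. Since $\iota_a$ and $\iota_0$ both factor through the projection $T^n \times A \to T^n$ (which is a homotopy equivalence onto $T^n$), they induce the same homomorphism on $H^1$, so $a$ lies in the shape.

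For the reverse inclusion, let $\iota \colon T^n \hookrightarrow T^n \times A$ be a Lagrangian embedding with $\iota^* = \iota_0^*$ and set $z = [\iota^* \lambda_{\textrm{can}}]$. First I would reduce to an exact-Lagrangian problem via the fiberwise translation $\tau_z(x, y) = (x, y - z)$, which is a symplectomorphism of $T^* T^n$ satisfying $\tau_z^* \lambda_{\textrm{can}} = \lambda_{\textrm{can}} - \sum_i z_i\,dx_i$. Thus $\widetilde\iota := \tau_z \circ \iota$ is an exact Lagrangian embedding of $T^n$ into $T^n \times (A - z)$ still inducing the identity on $H^1$. By compactness of $\widetilde\iota(T^n)$, its image is contained in $T^n \times U$ for some open $U \subset \R^n$ with $\overline{U} \subset A - z$. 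The reverse inclusion then reduces to the following key lemma: any exact Lagrangian embedding of $T^n$ into $T^n \times U$, for $U \subset \R^n$ open, that induces the identity on $H^1$ forces $0 \in U$. Indeed, if $z \notin A$ then $0 \notin A - z \supseteq \overline{U}$, contradicting the lemma.

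The main obstacle is this key lemma, which is where all of the symplectic rigidity content sits. Following Sikorav \cite{sikorav:rsc89}, I would prove it using Gromov's pseudo-holomorphic curve technique: choose a compatible almost complex structure $J$ on $T^* T^n$ and examine $J$-holomorphic disks with boundary on $\widetilde\iota(T^n)$; positivity of symplectic area, combined with the exactness hypothesis $[\widetilde\iota^*\lambda_{\textrm{can}}] = 0$, forces the origin to lie in the projection of $\widetilde\iota(T^n)$ to $\R^n$, hence $0 \in U$. Connectedness of $A$ is used here only to ensure the translation ambiguity in the definition of shape is resolved consistently. Alternative formulations via Floer cohomology of the pair $(\widetilde\iota, \iota_0)$, or via symplectic capacities and the displacement energy of exact Lagrangians (Chekanov), yield the same conclusion; see the shape-invariant packaging in \cite{eliashberg:nio91, sikorav:qpp91, mueller:csc17}. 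Combining the two inclusions completes the proof.
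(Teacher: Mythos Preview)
The paper does not prove this theorem at all: it is stated with attribution to \cite{eliashberg:nio91, sikorav:rsc89} and invoked as a black box in the discussion of the shape invariant. So there is no ``paper's own proof'' to compare against; your proposal is a sketch of the argument from the cited sources rather than a rederivation of something in this paper.

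That said, your outline tracks the standard proof closely and is essentially correct. The forward inclusion via the constant sections $\iota_a$ is exactly the easy half. For the reverse inclusion, the reduction by fiberwise translation to an exact Lagrangian torus and the appeal to the Gromov--Sikorav rigidity statement (an exact Lagrangian torus in $T^*T^n$ in the homotopy class of the zero section must meet the zero fiber) is the right idea and is precisely what \cite{sikorav:rsc89} does. One minor point to tighten: your claim that $\iota_a^* = \iota_0^*$ because both ``factor through the projection'' implicitly uses that $A$ is connected (so that all sections $\iota_a$ are homotopic in $T^n \times A$); if $0 \notin A$ the symbol $\iota_0$ is being used only as a fixed reference homomorphism, and the ``up to translation'' clause in Definition~\ref{dfn:shape} absorbs the ambiguity. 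You gesture at this at the end, but it would be cleaner to say so up front rather than as an afterthought.
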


The role of ellipsoids $E (r_1, \ldots, r_n)$ in this paper can therefore be replaced by products of annuli $A (a_1, b_1, \ldots, a_n, b_n) = (S^1 \times [a_1, b_1]) \times \cdots \times (S^1 \times [a_n, b_n])$, $0 < a_i < b_i$, and the spectrum $(r_1, \ldots, r_n)$ of the ellipsoid $E (r_1, \ldots, r_n)$ by the shape $[a_1, b_1] \times \cdots \times [a_n, b_n]$ of the annulus $A (a_1, b_1, \ldots, a_n, b_n)$.
Similar to the proof given above, $\epsilon$-symplectic embeddings preserve the shape invariant up to an error that converges to zero as $\epsilon \to 0^+$, and this property is preserved by uniform limits (on compact subsets).
See \cite{mueller:csc17} for the $\epsilon = 0$ case.
Details are forthcoming.

We indicate how to prove Corollary~\ref{cor:eps-symp-rig} based on properties of the shape invariant.
An embedding $\iota \colon T^n \hookrightarrow M$ is non-Lagrangian if $\iota^* \omega \not= 0$ (at at least one point), or equivalently, its image is a non-Lagrangian submanifold.

\begin{thm}[\cite{laudenbach:hdl94, mueller:csc17}] \label{thm:rig-lag-hom-class}
Let $\iota \colon T^n \hookrightarrow (\R^{2 n}, \omega_0)$ be a non-Lagrangian embedding.
Then there exists a tubular neighborhood $N$ of $\iota (T^n)$ that admits no Lagrangian embedding $\jmath \colon T^n \hookrightarrow N$ so that the homomorphism $\jmath_* \colon H_1 (T^n, \R) \to H_1 (N, \R)$ is injective.
In particular, the shape $I (N, T^n, \iota^*)$ is empty.
\end{thm}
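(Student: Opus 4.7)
The plan is to argue by contradiction. Suppose that for every tubular neighborhood $N$ of $\iota(T^n)$ there is a Lagrangian embedding $\jmath \colon T^n \hookrightarrow N$ with $\jmath_* \colon H_1(T^n, \R) \to H_1(N, \R)$ injective. Choose a shrinking sequence of tubular neighborhoods $N_k$ with corresponding Lagrangian embeddings $\jmath_k \colon T^n \hookrightarrow N_k$, and seek a contradiction. The key tension is that homological injectivity forces $\jmath_k(T^n)$ to represent the same homology as $\iota(T^n)$ in $N_k$, whereas the pointwise Lagrangian condition $\jmath_k^* \omega_0 \equiv 0$ must, in the limit as $N_k$ shrinks to $\iota(T^n)$, become incompatible with the pointwise nonvanishing of $\iota^* \omega_0$ at some point of $T^n$.

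First I would set up a local model. Fix a tubular neighborhood diffeomorphism $\Psi \colon T^n \times B^n \to N_0$ with $\Psi(\theta, 0) = \iota(\theta)$ and projection $\pi \colon T^n \times B^n \to T^n$, and pull back $\Omega = \Psi^* \omega_0$. Writing $\Psi^{-1} \circ \jmath_k(\theta) = (\phi_k(\theta), f_k(\theta))$ with $\| f_k \|_{C^0} \to 0$, injectivity of $\jmath_{k*}$ together with $\pi_*$ being an isomorphism implies that $\phi_{k*} = (\pi \circ \jmath_k)_*$ is an injective, hence invertible, endomorphism of $H_1(T^n, \R) \cong \R^n$, so that $\phi_k \colon T^n \to T^n$ has nonzero degree $d_k$. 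Decomposing $\Omega = \Omega_{hh} + \Omega_{hv} + \Omega_{vh} + \Omega_{vv}$ according to horizontal and vertical inputs, the Lagrangian identity $\jmath_k^* \omega_0 \equiv 0$ expands pointwise into a relation between $\phi_k^*(\Omega_{hh})$, whose leading term at $v = 0$ equals $\iota^* \omega_0$, and terms linear and quadratic in $df_k$.

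Next, I would combine this local calculation with a degree-integration argument. Pick $p \in T^n$ with $\iota^* \omega_0(p) \neq 0$, and choose local coordinates near $p$ in which the coefficient of some $d\theta_i \wedge d\theta_j$ in $\iota^* \omega_0$ is strictly positive on a small ball around $p$. Integrating $\phi_k^*(\iota^* \omega_0)$ over $T^n$ against a suitable test $(n - 2)$-form, and applying the degree formula (which introduces a multiplicative factor of $d_k$), yields a definite-sign lower bound for the symplectic area of a 2-sub-chain of $\jmath_k(T^n)$ corresponding to the $d\theta_i \wedge d\theta_j$-plane over the ball around $p$. Once $f_k$ is sufficiently small, this lower bound contradicts $\jmath_k^* \omega_0 \equiv 0$.

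The hard part is obtaining $C^1$-control of $\phi_k$ and $f_k$: uniform $C^0$-smallness of $f_k$ does not automatically bound $df_k$, and the degrees $d_k$ might be unbounded, allowing arbitrary stretching of $\phi_k$. To close the argument I would expect to need either a volume estimate for the embedded Lagrangian $\jmath_k(T^n) \subset N_k$ (which, combined with the Lagrangian condition and the compatibility between $\omega_0$ and a chosen metric, produces $L^2$-bounds on derivatives), or a Gromov-compactness style argument tailored to Lagrangian tori in shrinking tubular neighborhoods. The contradiction then arises by comparing the strictly positive symplectic area forced by $\iota^* \omega_0(p) \neq 0$ with the area vanishing as $N_k$ shrinks to $\iota(T^n)$.
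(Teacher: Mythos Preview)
The paper does not prove this statement at all; it is quoted as an external result from \cite{laudenbach:hdl94, mueller:csc17}, and the actual proofs there rely on hard symplectic topology (Gromov's theory of pseudo\-holomorphic disks with boundary on a Lagrangian torus, and the resulting area/energy bounds). Your proposal, by contrast, is an attempt at a soft proof, and it has two genuine gaps that cannot be repaired along the lines you sketch.

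First, the degree--integration step cannot produce a nonzero obstruction. Since $\omega_0 = d\lambda_0$ is exact on $\R^{2n}$, the $2$-form $\iota^* \omega_0 = d(\iota^* \lambda_0)$ is exact on $T^n$; hence for any closed test $(n-2)$-form $\eta$ one has $\int_{T^n} \phi_k^* (\iota^* \omega_0) \wedge \eta = 0$ by Stokes, regardless of $d_k$, and if $\eta$ is not closed the degree formula you invoke is unavailable. Pointwise nonvanishing of $\iota^* \omega_0$ at $p$ carries no cohomological weight, so there is no ``definite-sign lower bound'' to extract. Second, the $C^1$-control problem you flag is exactly the content of the theorem, not a technicality: the whole point of the result is that the Lagrangian condition, which is $C^1$ in nature, exhibits $C^0$-rigidity that \emph{cannot} be seen from local models, volume estimates, or Gromov compactness applied to the embeddings $\jmath_k$ themselves (a Lagrangian torus confined to a thin tube can still have unbounded derivatives and unbounded degree $d_k$). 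What forces the contradiction in the cited proofs is that any Lagrangian torus in $\R^{2n}$ bounds a pseudoholomorphic disk of symplectic area bounded below in terms of its Liouville class, and this external holomorphic input is precisely what your outline is missing.
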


\begin{proof}[Sketch of proof of Corollary~\ref{cor:eps-symp-rig}]
Let $\iota \colon T^n \hookrightarrow M_1$ be a Lagrangian torus, and $N$ be an arbitrary tubular neighborhood of $(\varphi \circ \iota) (T^n)$.
Let $\psi_k$ be as in Lemma~\ref{lem:eps-symp-approx-symp} (defined on a polydisk $P (r_1, \ldots, r_n)$) so that $\varphi_k \circ \psi_k$ is symplectic.
Then for $k$ sufficiently large, the torus $(\varphi_k \circ \psi_k \circ \iota) (T^n)$ is Lagrangian and contained in $N$, and $\varphi_k \circ \psi_k$ is homotopic to $\varphi$.
By Theorem~\ref{thm:rig-lag-hom-class}, $(\varphi \circ \iota) (T^n)$ must be Lagrangian.
Thus $\varphi$ maps Lagrangian tori to Lagrangian tori, and hence must be conformally symplectic \cite{mueller:csc17}, i.e.\ $\varphi^* \omega_2 = c \, \omega_1$.
That $c = 1$ can be proved using \cite[Proposition~2.29]{mueller:csc17}.
\end{proof}

Let $(M_1, \xi_1)$ and $(M_2, \xi_2)$ be cooriented contact manifolds of the same dimension, and $g_1$ be a Riemannian metric on $M_1$.
An embedding $\varphi \colon M_1 \to M_2$ is called $\epsilon$-contact if there are contact forms $\alpha_1$ on $M_1$ and $\alpha_2$ on $M_2$ so that $\| \varphi^* \alpha_2 - \alpha_1 \| \le \epsilon$ and $\| \varphi^* d\alpha_2 - d\alpha_1 \| \le \epsilon$.
This definition allows the Moser argument in the proof of Lemma~\ref{lem:eps-symp-approx-symp} to go through in the contact setting \cite[page 135f]{mcduff:ist17}, and the proof of $C^0$-rigidity of $\epsilon$-symplectic embeddings in this section can be adapted to $\epsilon$-contact embeddings.
Note that the proof using capacities does not generalize, since the capacity of the symplectization of a contact manifold is infinite.
Compare to \cite{mueller:csc17}.

\section*{Acknowledgments}
We would like to thank Michael Freedman for formulating the question about $\epsilon$-symplectic rigidity answered in this paper, and Yasha Eliashberg for relaying it.

\bibliography{eps-symp-arxiv}
\bibliographystyle{plain}

\end{document}